  \newcommand{\theoname}{Theorem}
  \newcommand{\lemmname}{Lemma}
  \newcommand{\coroname}{Corollary}
  \newcommand{\propname}{Proposition}
  \newcommand{\definame}{Definition}
  \newcommand{\hyponame}{Hypothesis}
  \newcommand{\remkname}{Remark}
  \newcommand{\explname}{Example}
\theoremstyle{plain}
\newtheorem{theorem}{\theoname}[section]
\newtheorem{lemma}[theorem]{\lemmname}
\newtheorem{corollary}[theorem]{\coroname}
\newtheorem{proposition}[theorem]{\propname}
\theoremstyle{definition}
\newtheorem{definition}[theorem]{\definame}
\newtheorem{remark}[theorem]{\remkname}
\newtheorem{example}[theorem]{\explname}
\DeclareMathOperator{\Real}{Re}
\DeclareMathOperator{\rank}{rk}
\DeclareMathOperator{\range}{Ran}
\DeclareMathOperator{\id}{Id}
\DeclareMathOperator{\Span}{Span}
\newcommand{\suchthat}{\;|\:}
\newcommand{\midsuchthat}{\;\middle|\:}
\newcommand{\transp}{{\mathrm{T}}}
\newcommand{\abs}[1]{\left\lvert #1\right \rvert} 
\newcommand{\floor}[1]{\left\lfloor #1 \right\rfloor} 
\newcommand{\ceil}[1]{\left\lceil #1 \right\rceil} 
\numberwithin{table}{section}
\numberwithin{figure}{section}
\numberwithin{equation}{section}
\begin{document}

\setlist[enumerate, 1]{label={\textnormal{(\alph*)}}, ref={(\alph*)}}
\setlist[enumerate, 2]{label={\textnormal{(\roman*)}}, ref={(\roman*)}}

\newcommand{\paperTitle}{Relative controllability of linear difference equations}
\newcommand{\paperAuthor}{Guilherme Mazanti}
\newcommand{\paperKeywords}{Relative controllability, difference equations, delays, explicit solution, rational dependence, minimal controllability time}
\newcommand{\paperMSC}{39A06, 93B05, 93C23, 93B25}

\title{\paperTitle}
\makeatletter
{\def\@thefnmark{}\def\hyper@@anchor{}\@footnotetext{This article was prepared while the author was with CMAP \& Inria, team GECO, École Polytechnique, CNRS, Université Paris-Saclay, 91128 Palaiseau Cedex, France. This research was partially supported by the iCODE Institute, research project of the IDEX Paris-Saclay, and by the Hadamard Mathematics LabEx (LMH) through the grant number ANR-11-LABX-0056-LMH in the ``Programme des Investissements d'Avenir''.}\@footnotetext{\emph{2010 Mathematics Subject Classification.} \paperMSC{}.}\@footnotetext{\emph{Keywords.} \paperKeywords{}.}}
\makeatother
\author{\paperAuthor\thanks{Laboratoire de Mathématiques d'Orsay, Univ. Paris-Sud, CNRS, Université Paris-Saclay, 91405 Orsay, France. (\texttt{guilherme.mazanti@math.u-psud.fr}, \texttt{http://www.math.u-psud.fr/\string~mazanti}).}}
\maketitle

\hypersetup{pdftitle={\paperTitle}, pdfauthor={\paperAuthor}, pdfkeywords={\paperKeywords}, pdfsubject={\paperMSC}}

\begin{abstract}
In this paper, we study the relative controllability of linear difference equations with multiple delays in the state by using a suitable formula for the solutions of such systems in terms of their initial conditions, their control inputs, and some matrix-valued coefficients obtained recursively from the matrices defining the system. Thanks to such formula, we characterize relative controllability in time $T$ in terms of an algebraic property of the matrix-valued coefficients, which reduces to the usual Kalman controllability criterion in the case of a single delay. Relative controllability is studied for solutions in the set of all functions and in the function spaces $L^p$ and $\mathcal C^k$. We also compare the relative controllability of the system for different delays in terms of their rational dependence structure, proving that relative controllability for some delays implies relative controllability for all delays that are ``less rationally dependent'' than the original ones, in a sense that we make precise. Finally, we provide an upper bound on the minimal controllability time for a system depending only on its dimension and on its largest delay.
\end{abstract}

\paragraph*{Notations}

In this paper, we denote by $\mathbb N$ and $\mathbb N^\ast$ the sets of nonnegative and positive integers, respectively. For $a, b \in \mathbb R$, we write the set of all integers between $a$ and $b$ as $\llbracket a, b \rrbracket = [a, b] \cap \mathbb Z$, with the convention that $[a, b] = \emptyset$ if $a > b$. The cardinality of a set $\mathcal N$ is denoted by $\#\mathcal N$. For $\xi \in \mathbb R^N$, we use $\xi_{\min}$ and $\xi_{\max}$ to denote the smallest and the largest components of $\xi$, respectively. For $\xi \in \mathbb R$, the symbol $\floor{\xi}$ is used to the denote the integer part of $\xi$, i.e., the unique integer such that $\xi - 1 < \floor{\xi} \leq \xi$.

The set of $d \times m$ matrices with coefficients in $K \subset \mathbb C$ is denoted by $\mathcal M_{d, m}(K)$, or simply by $\mathcal M_d(K)$ when $m = d$. The identity matrix in $\mathcal M_d(\mathbb C)$ is denoted by $\id_d$ and the zero matrix in $\mathcal M_{d, m}(\mathbb C)$ is denoted by $0_{d, m}$, or simply by $0$ when its dimensions are clear from the context. We use $e_1, \dotsc, e_d$ to denote the canonical basis of $\mathbb C^d$. For $p \in [1, +\infty]$, $\abs{\cdot}_{p}$ indicates both the $\ell^p$-norm in $\mathbb C^d$ and the corresponding induced matrix norm in $\mathcal M_{d, m}(\mathbb C)$. The range of a matrix $M \in \mathcal M_{d, m}(\mathbb C)$ is denoted by $\range M$, and $\rank M$ denotes the dimension of $\range M$.

\section{Introduction}
\label{SecIntro}

This paper characterizes the relative controllability of the controlled difference equation
\begin{equation}
\label{MainSyst}
\Sigma(A, B, \Lambda): \qquad x(t) = \sum_{j=1}^N A_j x(t - \Lambda_j) + B u(t),
\end{equation}
where $x(t) \in \mathbb C^d$ is the state, $u(t) \in \mathbb C^m$ is the control input, $N, d, m \in \mathbb N^\ast$, $\Lambda = (\Lambda_1, \dotsc, \Lambda_N) \in (0, +\infty)^N$ is the vector of positive delays, $A = (A_1, \dotsc, A_N) \in \mathcal M_d(\mathbb C)^N$ is a $N$-tuple of $d \times d$ complex-valued matrices, and $B \in \mathcal M_{d, m}(\mathbb C)$ is a $d \times m$ complex-valued matrix.

An important motivation for the study of \eqref{MainSyst} is that several hyperbolic PDEs can be transformed into such system thanks to classical transformations based mainly on the method of characteristics \cite{Cooke1968Differential, Fridman2010Bounds, Kloss2012Flow, Slemrod1971Nonexistence, Coron2015Dissipative}. In particular, stability criteria for transport and wave equations on networks have been obtained in \cite{Chitour2016Stability} through the stability analysis of \eqref{MainSyst} with no control input, and a similar method has been used in \cite{Coron2015Dissipative} to characterize the stability of nonlinear hyperbolic systems with respect to the $\mathcal C^1$ and $W^{1, p}$ norms.

Another motivation comes from the study of more general neutral functional differential equations of the form
\begin{equation}
\label{NFDE}
\frac{d}{dt}\left(x(t) - \sum_{j=1}^N A_j x(t - \Lambda_j)\right) = f(x_t) + B u(t),
\end{equation}
where $x_t: [-r, 0] \to \mathbb C^d$ is given by $x_t(s) = x(t + s)$, $r \geq \max_{j \in \llbracket 1, N\rrbracket} \Lambda_j$, and $f$ is some function defined on a certain space (typically $\mathcal C^k([-r, 0], \mathbb C^d)$ or $W^{k, p}((-r, 0), \mathbb C^d)$) \cite{Cruz1970Stability, Datko1977Linear, Hale1985Stability, Ngoc2014Exponential}, \cite[Section 9.7]{Hale1993Introduction}. It has been proved in \cite{Henry1974Linear} that, under no control, there is a deep link between the dynamic properties of \eqref{MainSyst} and \eqref{NFDE}, due to the fact that the essential spectra of the associated semigroups coincide. Such link has been exploited, for instance, in \cite{Hale2002Strong} to obtain criteria for the stabilizability of \eqref{MainSyst} and \eqref{NFDE} under linear state feedbacks. Other works have also considered control and stabilization properties for \eqref{NFDE}, such as \cite{OConnor1983Stabilization, OConnor1983Function, Pandolfi1976Stabilization, Salamon1984Control}.

The stability analysis of \eqref{MainSyst} with no control input has a long history \cite{Melvin1974Stability, Avellar1980Zeros, Cruz1970Stability, Henry1974Linear, Datko1977Linear, Hale1985Stability, Avellar1990Difference} (see also \cite[Chapter 9]{Hale1993Introduction} and references therein). In particular, it has been shown that the stability of \eqref{MainSyst} is not preserved under perturbations of the delays \cite{Cruz1970Stability, Henry1974Linear, Melvin1974Stability, Hale1993Introduction}, and that the rational dependence of the delays plays an important role in the stability analysis \cite{Chitour2016Stability, Henry1974Linear, Avellar1980Zeros, Silkowski1976Star, Michiels2009Strong}. Such interplay between rational dependence of the delays and properties of \eqref{MainSyst} is also present when one considers relative controllability, as we show in Section \ref{SecCompRat}.

Concerning the controllability problem, due to the infinite-dimensional nature of the dynamics of neutral functional differential equations and difference equations, several different notions of controllability can be used, such as exact, approximate, spectral, or relative controllability \cite{Salamon1984Control, Chyung1970Controllability}. Relative controllability has been originally introduced in the study of control systems with delays in the control input \cite{Klamka1976Relative, Olbrot1972Controllability, Chyung1970Controllability}, but this notion has later been extended and used to study also systems with delays in the state \cite{Diblik2008Controllability, Pospisil2015Relative} and in more general frameworks, such as for stochastic control systems \cite{Karthikeyan2015Controllability} or fractional integro-differential systems \cite{Balachandran2016Relative}. The main idea of relative controllability is that, instead of controlling the state $x_t: [-r, 0] \to \mathbb C^d$ of \eqref{MainSyst}, defined by $x_t(s) = x(t + s)$, in a certain function space such as $\mathcal C^k([-r, 0], \mathbb C^d)$ or $L^p((-r, 0), \mathbb C^d)$, where $r \geq \max_{j \in \llbracket 1, N\rrbracket} \Lambda_j$, one controls only the final state $x(t) = x_t(0)$. We defer the precise definition of relative controllability used in this paper to Definition \ref{DefiRelativeControl}, after having proved in Theorems \ref{TheoControlT1} and \ref{TheoControlT2} criteria for several equivalent or closely related notions of relative controllability.

The relative controllability of systems related to \eqref{MainSyst} has been addressed in \cite{Diblik2008Controllability, Pospisil2015Relative, Diblik2014Control}, where, motivated by the analysis of the relative controllability of the continuous-time delayed control system $\dot x(t) = A_0 x(t - \tau) + B_0 u(t)$, the authors consider a discrete-time system under the form
\begin{equation}
\label{SystDiblik}
\Delta x(t) = A x(t - k) + B u(t), \qquad t \in \mathbb N,
\end{equation}
where $\Delta x(t) = x(t+1) - x(t)$ and $k \in \mathbb N^\ast$. Such system corresponds to an explicit Euler discretization of the continuous-time system $\dot x(t) = A_0 x(t - \tau) + B_0 u(t)$ with time step $h = \frac{\tau}{k}$ and $A = h A_0$, $B = h B_0$. Using an explicit representation of solutions based on discrete delayed matrix exponentials, the authors characterize the relative controllability of \eqref{SystDiblik} and the minimal controllability time, and provide expressions for the control input steering the system from a prescribed initial condition to a desired final state. A comparison between the results of this paper and those from \cite{Diblik2008Controllability} is provided in Example \ref{ExplCompareDiblik}.


In this paper, the relative controllability of \eqref{MainSyst} is analyzed through a suitable representation formula for its solutions, describing a solution in time $t$ in terms of its initial condition, the control input, and some matrix-valued coefficients computed recursively (see Proposition \ref{PropExplicit}). Such coefficients generalize the discrete delayed matrix exponentials introduced in \cite{Diblik2006Representation} for \eqref{SystDiblik} to the case of several delays and matrices. A similar formula has been used in \cite{Chitour2016Persistently} to analyze the stability of a system of transport equations on a network under intermittent damping and in \cite{Chitour2016Stability} to obtain stability criteria for \eqref{MainSyst} under no control and with time-varying matrices $A_j$, which in particular provide generalizations of classical stability results for difference equations such as the Hale--Silkowski criterion from \cite{Silkowski1976Star} (cf. also \cite{Avellar1980Zeros}, \cite[Section 9.6]{Hale1993Introduction}).

The plan of the paper is as follows. After some general discussion on the well-posedness of \eqref{MainSyst} and the derivation of the explicit representation formula for its solutions in Section \ref{SecWellPosed}, we characterize relative controllability for some fixed final time $T > 0$ in Section \ref{SecControlT} in the set of all functions and in the function spaces $L^p$ and $\mathcal C^k$. For given $A = (A_1, \dotsc, A_N) \in \mathcal M_d(\mathbb C)^N$ and $B \in \mathcal M_{d, m}(\mathbb C)$, Section \ref{SecCompRat} compares the relative controllability of \eqref{MainSyst} for different delays $\Lambda_1, \dotsc, \Lambda_N$ and $L_1, \dotsc, L_N$ in terms of their rational dependence structure. Finally, Section \ref{SecMinBound} provides a uniform upper bound on the minimal time for the relative controllability of \eqref{MainSyst}.

Notice that all the results in this paper also hold, with the same proofs, if one assumes $A = (A_1, \dotsc, A_N) \in \mathcal M_d(\mathbb R)^N$ and $B \in \mathcal M_{d, m}(\mathbb R)$ with the state $x(t) \in \mathbb R^d$ and the control $u(t) \in \mathbb R^m$. We choose complex-valued matrices, states, and controls for \eqref{MainSyst} in this paper following the approach of \cite{Chitour2016Stability}, which is mainly motivated by the fact that classical spectral conditions for difference equations are more naturally written down in such framework.

\section{Well-posedness and explicit representation of solutions}
\label{SecWellPosed}

This sections establishes the well-posedness of \eqref{MainSyst} and provides an explicit representation formula for its solutions. The proofs of the main results of this section, Propositions \ref{PropExistUnique} and \ref{PropExplicit}, are very similar to the ones given in \cite{Chitour2016Stability} for the corresponding uncontrolled system, and for such reason are omitted here. We start by providing the definition of solution used in this paper.

\begin{definition}
\label{DefiSolution}
Let $A = (A_1, \dotsc, A_N) \in \mathcal M_d(\mathbb C)^N$, $B \in \mathcal M_{d, m}(\mathbb C)$, $\Lambda = (\Lambda_1, \dotsc, \Lambda_N) \in (0, +\infty)^N$, $T > 0$, $x_0: \left[-\Lambda_{\max}, 0\right) \to \mathbb C^d$, and $u: [0, T] \to \mathbb C^m$. We say that $x: \left[-\Lambda_{\max}, T\right] \to \mathbb C^d$ is a \emph{solution} of $\Sigma(A, B, \Lambda)$ with initial condition $x_0$ and control $u$ if it satisfies \eqref{MainSyst} for every $t \in [0, T]$ and $x(t) = x_0(t)$ for $t \in \left[-\Lambda_{\max}, 0\right)$.

For $t \in [0, T]$ and $x: \left[-\Lambda_{\max}, T\right] \to \mathbb C^d$ a solution of $\Sigma(A, B, \Lambda)$, we define $x_t: \left[-\Lambda_{\max}, 0\right) \allowbreak\to \mathbb C^d$ by $x_t = x(t + \cdot)|_{\left[-\Lambda_{\max}, 0\right)}$.
\end{definition}

Notice that this definition of solution contains no regularity assumptions on $x_0$, $u$, or $x$. Nonetheless, this weak framework is enough to guarantee existence and uniqueness of solutions, as stated in the next proposition, whose proof is very similar to that of \cite[Proposition 3.2]{Chitour2016Stability}.

\begin{proposition}
\label{PropExistUnique}
Let $A = (A_1, \dotsc, A_N) \in \mathcal M_d(\mathbb C)^N$, $B \in \mathcal M_{d, m}(\mathbb C)$, $\Lambda = (\Lambda_1, \dotsc, \Lambda_N) \in (0, +\infty)^N$, $T > 0$, $x_0: \left[-\Lambda_{\max}, 0\right) \to \mathbb C^d$, and $u: [0, T] \to \mathbb C^m$. Then $\Sigma(A, B, \Lambda)$ admits a unique solution $x: \left[-\Lambda_{\max},\allowbreak T\right]\allowbreak \to \mathbb C^d$ with initial condition $x_0$ and control $u$.
\end{proposition}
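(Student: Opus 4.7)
The natural approach is induction on intervals of length $\Lambda_{\min}$, since the right-hand side of \eqref{MainSyst} at time $t$ only involves values of $x$ at strictly earlier times (namely at $t - \Lambda_j$, each of which is at most $t - \Lambda_{\min}$). The presence of the control term $Bu(t)$ does not affect this recursive structure because $u(t)$ is prescribed, so the argument is essentially the same as in the uncontrolled case treated in \cite[Proposition 3.2]{Chitour2016Stability}.

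Concretely, I would define $x$ on $\left[-\Lambda_{\max}, T\right]$ piece by piece. On $\left[-\Lambda_{\max}, 0\right)$, set $x = x_0$, which is forced by the definition of solution. For the first step, observe that for any $t \in [0, \Lambda_{\min}) \cap [0, T]$ and any $j \in \llbracket 1, N \rrbracket$, one has $t - \Lambda_j \in \left[-\Lambda_{\max}, 0\right)$, so $x(t - \Lambda_j) = x_0(t - \Lambda_j)$ is already known; equation \eqref{MainSyst} then prescribes $x(t)$ uniquely as $\sum_{j=1}^N A_j x_0(t - \Lambda_j) + B u(t)$. This both forces the value of $x(t)$ (uniqueness on this interval) and provides a well-defined candidate (existence).

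For the inductive step, assume $x$ has been uniquely defined on $\left[-\Lambda_{\max}, n \Lambda_{\min}\right) \cap \left[-\Lambda_{\max}, T\right]$ for some $n \geq 1$. For $t \in \left[n\Lambda_{\min}, (n+1)\Lambda_{\min}\right) \cap [0, T]$ and each $j$, one has $t - \Lambda_j \leq t - \Lambda_{\min} < n \Lambda_{\min}$, so $x(t - \Lambda_j)$ lies in the region where $x$ is already determined; the right-hand side of \eqref{MainSyst} is therefore meaningful, and using it as the definition of $x(t)$ yields the unique extension. Since $T < +\infty$, iterating this procedure at most $\lceil T/\Lambda_{\min} \rceil$ times covers $[0, T]$ and produces the unique solution $x$ on $\left[-\Lambda_{\max}, T\right]$.

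There is no genuine obstacle here: the only point that requires any care is the bookkeeping for the interval indexing to ensure that, at every step, each $t - \Lambda_j$ truly falls in a region where $x$ is already known, and this is handled by the elementary inequality $t - \Lambda_j \leq t - \Lambda_{\min}$. Because no regularity of $x_0$ or $u$ is used and the construction is purely pointwise, the proof goes through in the very weak setting of Definition \ref{DefiSolution}.
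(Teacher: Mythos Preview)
Your proposal is correct and follows precisely the approach the paper indicates: the paper omits the proof, referring to \cite[Proposition 3.2]{Chitour2016Stability}, whose argument is exactly this induction on intervals of length $\Lambda_{\min}$, with the control term $Bu(t)$ carried along as a prescribed forcing. Aside from a harmless off-by-one in your count of iterations (when $T/\Lambda_{\min}$ is an integer one needs one extra step to pick up the endpoint $t=T$), there is nothing to add.
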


%

\begin{remark}
\label{RemkEqualAE}
Let $T > 0$. If $x_0, \widetilde x_0: \left[-\Lambda_{\max}, 0\right) \to \mathbb C^d$ and $u, \widetilde u: [0, T] \to \mathbb C^m$ are such that $x_0 = \widetilde x_0$ and $u = \widetilde u$ almost everywhere on their respective domains, then the solutions $x, \widetilde x: \left[-\Lambda_{\max}, T\right] \to \mathbb C^d$ of $\Sigma(A, B, \Lambda)$ associated respectively with $x_0$, $u$, and $\widetilde x_0$, $\widetilde u$, satisfy $x = \widetilde x$ almost everywhere on $\left[-\Lambda_{\max}, T\right]$. In particular, one still obtains existence and uniqueness of solutions of $\Sigma(A, B, \Lambda)$ (in the sense of functions defined almost everywhere) for initial conditions in $L^p((-\Lambda_{\max}, 0), \mathbb C^d)$ and controls in $L^p((0, T), \mathbb C^m)$ for some $p \in [1, +\infty]$, any such solution $x$ satisfies $x \in L^p(\left(-\Lambda_{\max}, T\right), \mathbb C^d)$, and hence $x_t \in L^p((-\Lambda_{\max}, 0), \mathbb C^d)$ for every $t \in [0, T]$.
\end{remark}

\begin{remark}
\label{RemkCk}
If $x_0 \in \mathcal C^k([-\Lambda_{\max}, 0), \mathbb C^d)$ and $u \in \mathcal C^k([0, T], \mathbb C^m)$ for some $k \in \mathbb N$, then the corresponding solution $x$ of $\Sigma(A, B, \Lambda)$ belongs to $\mathcal C^k([-\Lambda_{\max}, T], \mathbb C^d)$ if and only if
\begin{equation}
\label{CompatibilityCk}
\lim_{t \to 0} x_0^{(r)}(t) = \sum_{j=1}^N A_j x_0^{(r)}(-\Lambda_j) + B u^{(r)}(0), \qquad \forall r \in \llbracket 0, k\rrbracket,
\end{equation}
where $x_0^{(r)}$ and $u^{(r)}$ denote the $r$-th derivatives of $x_0$ and $u$, respectively.
\end{remark}

Due to the compatibility condition \eqref{CompatibilityCk} required for obtaining solutions $x$ in the space $\mathcal C^k([-\Lambda_{\max},\allowbreak T], \mathbb C^d)$, we find it useful to introduce the following definition.

\begin{definition}
Let $A = (A_1, \dotsc, A_N) \in \mathcal M_d(\mathbb C)^N$, $B \in \mathcal M_{d, m}(\mathbb C)$, $\Lambda = (\Lambda_1, \dotsc, \Lambda_N) \in (0, +\infty)^N$, $x_0: [-\Lambda_{\max}, 0) \to \mathbb C^d$, and $k \in \mathbb N$. We say that $x_0$ is \emph{$\mathcal C^k$-admissible} for system $\Sigma(A, B, \Lambda)$ if $x_0 \in \mathcal C^k([-\Lambda_{\max},\allowbreak 0), \mathbb C^d)$ and, for every $r \in \llbracket 0, k\rrbracket$, $\lim_{t \to 0} x_0^{(r)} (t)$ exists and
\[
\lim_{t \to 0} x_0^{(r)} (t) - \sum_{j=1}^N A_j x_0^{(r)}(-\Lambda_j) \in \range B.
\]
\end{definition}

In order to provide an explicit representation for the solutions of $\Sigma(A, B, \Lambda)$, we first provide a recursive definition of the matrix coefficients $\Xi_{\mathbf n}$ appearing in such representation.

\begin{definition}
\label{DefiXi}
For $A = (A_1, \dotsc, A_N) \in \mathcal M_d(\mathbb C)^N$ and $\mathbf n \in \mathbb Z^N$, we define the matrix $\Xi_{\mathbf n} \in \mathcal M_d(\mathbb C)$ inductively by
\begin{equation}
\label{EqDefiXi}
\Xi_{\mathbf n} = 
\begin{dcases*}
0, & if $\mathbf n \in \mathbb Z^N \setminus \mathbb N^N$, \\
\id_d, & if $\mathbf n = 0$, \\
\sum_{k=1}^N A_k \Xi_{\mathbf n - e_k}, & if $\mathbf n \in \mathbb N^N \setminus \{0\}$. \\
\end{dcases*}
\end{equation}
\end{definition}


We now provide an explicit representation for the solutions of $\Sigma(A, B, \Lambda)$, which is a generalization of \cite[Lemma 3.13]{Chitour2016Stability} to the case of the controlled difference equation \eqref{MainSyst}.

\begin{proposition}
\label{PropExplicit}
Let $A = (A_1, \dotsc, A_N) \in \mathcal M_d(\mathbb C)^N$, $B \in \mathcal M_{d, m}(\mathbb C)$, $\Lambda = (\Lambda_1, \dotsc, \Lambda_N) \in (0, +\infty)^N$, $T > 0$, $x_0: \left[-\Lambda_{\max}, 0\right) \to \mathbb C^d$, and $u: [0, T] \to \mathbb C^m$. The corresponding solution $x: \left[-\Lambda_{\max}, T\right] \to \mathbb C^d$ of $\Sigma(A, B, \Lambda)$ is given for $t \in [0, T]$ by
\begin{equation}
\label{ExplicitSol}
x(t) = \sum_{\substack{(\mathbf n, j) \in \mathbb N^N \times \llbracket 1, N\rrbracket \\ -\Lambda_j \leq t - \Lambda \cdot \mathbf n < 0}} \Xi_{\mathbf n - e_j} A_j x_0(t - \Lambda \cdot \mathbf n) + \sum_{\substack{\mathbf n \in \mathbb N^N \\ \Lambda \cdot \mathbf n \leq t}} \Xi_{\mathbf n} B u(t - \Lambda \cdot \mathbf n).
\end{equation}
\end{proposition}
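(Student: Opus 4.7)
The plan is to define $y: [-\Lambda_{\max}, T] \to \mathbb C^d$ by $y(t) = x_0(t)$ for $t \in [-\Lambda_{\max}, 0)$ and by the right-hand side of \eqref{ExplicitSol} for $t \in [0, T]$, and to verify that $y$ is a solution of $\Sigma(A, B, \Lambda)$ with initial condition $x_0$ and control $u$. By the uniqueness part of Proposition~\ref{PropExistUnique}, this forces $x = y$, yielding \eqref{ExplicitSol}.

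First, since every $\Lambda_j$ is strictly positive, the conditions $\Lambda \cdot \mathbf n \leq t$ and $t - \Lambda \cdot \mathbf n \geq -\Lambda_{\max}$ each restrict $\mathbf n \in \mathbb N^N$ to a finite set, so the two sums in \eqref{ExplicitSol} are finite and $y$ is well-defined. It then suffices to show that for every $t \in [0, T]$,
\[
y(t) = Bu(t) + \sum_{j=1}^N A_j y(t - \Lambda_j).
\]

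The main step is to expand this right-hand side and match it with $y(t)$ using the recursion \eqref{EqDefiXi}. For each $j$, if $t \geq \Lambda_j$ I expand $y(t - \Lambda_j)$ via \eqref{ExplicitSol} and substitute $\mathbf n = \mathbf m + e_j$, which sends $\Lambda \cdot \mathbf m$ to $\Lambda \cdot \mathbf n - \Lambda_j$ and the constraint $\mathbf m \in \mathbb N^N$ to $n_j \geq 1$; if $t < \Lambda_j$ I simply use $y(t - \Lambda_j) = x_0(t - \Lambda_j)$. After swapping the order of summation and invoking the convention $\Xi_{\mathbf p} = 0$ for $\mathbf p \notin \mathbb N^N$ of Definition~\ref{DefiXi} (which allows me to drop the $n_j \geq 1$ constraint), the coefficient of $Bu(t - \Lambda \cdot \mathbf n)$ becomes $\sum_{j=1}^N A_j \Xi_{\mathbf n - e_j}$, which equals $\Xi_{\mathbf n}$ for $\mathbf n \neq 0$ by the recursion; together with the isolated term $Bu(t) = \Xi_0 Bu(t)$, this reconstructs the control sum in \eqref{ExplicitSol}. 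The initial-condition contribution is treated analogously: the coefficient of $A_k x_0(t - \Lambda \cdot \mathbf n)$ collapses to $\sum_{j=1}^N A_j \Xi_{(\mathbf n - e_k) - e_j} = \Xi_{\mathbf n - e_k}$ for $\mathbf n \neq e_k$, while the residual terms $A_j x_0(t - \Lambda_j)$ arising from those $j$ with $t < \Lambda_j$ supply exactly the $(\mathbf n, k) = (e_j, j)$ entries (those with $\mathbf n - e_k = 0$), which are precisely the cases excluded from the recursion.

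The main obstacle is the combinatorial bookkeeping: one must verify that the index constraints in \eqref{ExplicitSol} transport correctly under the substitution $\mathbf n = \mathbf m + e_j$, and that spurious contributions (those that the case split $t \gtrless \Lambda_j$ would seem to drop from the expansion) vanish automatically because the corresponding factor $\Xi_{\mathbf n - e_j - e_k}$ is forced to zero by Definition~\ref{DefiXi} whenever $t < \Lambda_j$ and $-\Lambda_k \leq t - \Lambda \cdot \mathbf n$ both hold. Once this matching is in place, a single use of the recursion \eqref{EqDefiXi} closes the argument.
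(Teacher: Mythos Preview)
Your proposal is correct and follows exactly the approach the paper indicates: define the candidate by \eqref{ExplicitSol}, verify it satisfies \eqref{MainSyst} for every $t\in[0,T]$ and agrees with $x_0$ on $[-\Lambda_{\max},0)$, then invoke uniqueness from Proposition~\ref{PropExistUnique}. The paper omits the computation and refers to \cite[Lemma~3.13]{Chitour2016Stability}; your bookkeeping (in particular the observation that $t<\Lambda_j$ and $-\Lambda_k\le t-\Lambda\cdot\mathbf n$ force $\Lambda\cdot(\mathbf n-e_j-e_k)<0$, hence $\Xi_{\mathbf n-e_j-e_k}=0$) is precisely what is needed to carry it out.
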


Proposition \ref{PropExplicit} can be proved by verifying that the function $x: [-\Lambda_{\max}, T] \to \mathbb C^d$ defined in \eqref{ExplicitSol} satisfies indeed \eqref{MainSyst} for every $t \in [0, T]$ and is equal to the initial condition for negative time, which can be done by straightforward computations similar to the ones in \cite[Lemma 3.13]{Chitour2016Stability}.

The controllability results we establish in Section \ref{SecControlT} are based on the explicit representation for the solutions from Proposition \ref{PropExplicit}. Notice that the control $u$ only affects the second term of \eqref{ExplicitSol}. Since, in this term, $u$ is evaluated only at times $t - \Lambda \cdot \mathbf n$, one should pack together coefficients $\Xi_{\mathbf n}$ corresponding to different $\mathbf n, \mathbf n^\prime \in \mathbb N$ for which $\Lambda \cdot \mathbf n = \Lambda \cdot \mathbf n^\prime$, in the same manner as in \cite[Definition 3.10]{Chitour2016Stability}.

\begin{definition}
\label{DefiXiHat}
Let $\Lambda = (\Lambda_1, \dotsc, \Lambda_N) \in (0, +\infty)^N$. We partition $\mathbb N^N$ according to the equivalence relation $\sim$ defined by writing $\mathbf n \sim \mathbf n^\prime$ if $\Lambda \cdot \mathbf n = \Lambda \cdot \mathbf n^\prime$. We use $[\cdot]_\Lambda$ to denote the equivalence classes of $\sim$ and we set $\mathcal N_\Lambda = \mathbb N^N / \sim$. The index $\Lambda$ is omitted from the notation of $[\cdot]_\Lambda$ when the delay vector $\Lambda$ is clear from the context. We define
\begin{equation}
\label{EqDefiXiHat}
\widehat\Xi_{[\mathbf n]}^\Lambda = \sum_{\mathbf n^\prime \in [\mathbf n]} \Xi_{\mathbf n^\prime}.
\end{equation}
\end{definition}

Thanks to Definition \ref{DefiXiHat}, the representation formula \eqref{ExplicitSol} for the solutions of $\Sigma(A, B, \Lambda)$ can be written as
\begin{equation}
\label{ExplicitSolHat}
x(t) = \sum_{\substack{(\mathbf n, j) \in \mathbb N^N \times \llbracket 1, N\rrbracket \\ -\Lambda_j \leq t - \Lambda \cdot \mathbf n < 0}} \Xi_{\mathbf n - e_j} A_j x_0(t - \Lambda \cdot \mathbf n) + \sum_{\substack{[\mathbf n] \in \mathcal N_\Lambda \\ \Lambda \cdot \mathbf n \leq t}} \widehat\Xi^\Lambda_{[\mathbf n]} B u(t - \Lambda \cdot \mathbf n).
\end{equation}

\section{Relative controllability criteria}
\label{SecControlT}

This section presents the main relative controllability criteria from the paper, Theorems \ref{TheoControlT1} and \ref{TheoControlT2} below. Theorem \ref{TheoControlT1} provides a criterion for relative controllability in the set of all functions and in the $L^p$ spaces, whereas the criterion in Theorem \ref{TheoControlT2} characterizes relative controllability in the $\mathcal C^k$ spaces. Both algebraic criteria we obtain are expressed in terms of the coefficients $\widehat\Xi^\Lambda_{[\mathbf n]}$ and the matrix $B$ and are generalizations of the usual Kalman condition for the controllability of a discrete-time system. Their proofs are based on the explicit representation for solutions \eqref{ExplicitSolHat}.

\begin{theorem}
\label{TheoControlT1}
Let $A = (A_1, \dotsc, A_N) \in \mathcal M_d(\mathbb C)^N$, $B \in \mathcal M_{d, m}(\mathbb C)$, $\Lambda = (\Lambda_1, \dotsc, \Lambda_N) \in (0, +\infty)^N$, $T > 0$, and $p \in [1, +\infty]$. Define $\widehat\Xi^\Lambda_{[\mathbf n]}$ as in \eqref{EqDefiXiHat}. Then the following assertions are equivalent.
\begin{enumerate}
\item\label{CtlTKalman1} One has
\begin{equation}
\label{CtlLeq}
\Span \left\{\widehat\Xi^\Lambda_{[\mathbf n]} B w \midsuchthat [\mathbf n] \in \mathcal N_\Lambda,\; \Lambda \cdot \mathbf n \leq T,\; w \in \mathbb C^m\right\} = \mathbb C^d.
\end{equation}
\item\label{CtlT0} For every $x_0: [-\Lambda_{\max}, 0) \to \mathbb C^d$ and $x_1 \in \mathbb C^d$, there exists $u: [0, T] \to \mathbb C^m$ such that the solution $x$ of $\Sigma(A, B, \Lambda)$ with initial condition $x_0$ and control $u$ satisfies $x(T) = x_1$.
\item\label{CtlTEps} There exists $\varepsilon_0 > 0$ such that, for every $\varepsilon \in (0, \varepsilon_0)$, $x_0: [-\Lambda_{\max}, 0) \to \mathbb C^d$, and $x_1: [0, \varepsilon] \to \mathbb C^d$, there exists $u: [0, T+\varepsilon] \to \mathbb C^m$ such that the solution $x$ of $\Sigma(A, B, \Lambda)$ with initial condition $x_0$ and control $u$ satisfies $\left.x(T + \cdot)\right|_{[0, \varepsilon]} = x_1$.
\item\label{CtlTLp} There exists $\varepsilon_0 > 0$ such that, for every $\varepsilon \in (0, \varepsilon_0)$, $x_0 \in L^p((-\Lambda_{\max}, 0), \mathbb C^d)$, and $x_1 \in L^p((0, \varepsilon),\allowbreak \mathbb C^d)$, there exists $u \in L^p((0, T+\varepsilon), \mathbb C^m)$ such that the solution $x$ of $\Sigma(A, B, \Lambda)$ with initial condition $x_0$ and control $u$ satisfies $x \in L^p((-\Lambda_{\max}, T + \varepsilon), \mathbb C^d)$ and $\left.x(T + \cdot)\right|_{[0, \varepsilon]} = x_1$.
\end{enumerate}
\end{theorem}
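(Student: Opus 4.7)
The strategy is to read off all four statements from the explicit representation \eqref{ExplicitSolHat}, which shows that the endpoint $x(t)$ decomposes as the sum of a term depending only on $x_0$ and a term that is linear in $u$, with weights given by the coefficients $\widehat\Xi^\Lambda_{[\mathbf n]} B$. I would begin with \ref{CtlTKalman1} $\Leftrightarrow$ \ref{CtlT0}. Applying \eqref{ExplicitSolHat} at $t = T$ gives
\[
x(T) = y_0 + \sum_{\substack{[\mathbf n] \in \mathcal N_\Lambda \\ \Lambda \cdot \mathbf n \leq T}} \widehat\Xi^\Lambda_{[\mathbf n]} B\, u(T - \Lambda \cdot \mathbf n),
\]
with $y_0 \in \mathbb C^d$ depending only on $x_0$. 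Picking a representative $\mathbf n^\prime$ of each equivalence class with $\Lambda \cdot \mathbf n \leq T$, assigning the value $u(T - \Lambda \cdot \mathbf n^\prime) = w_{[\mathbf n]}$ and setting $u$ to zero at the other (finitely many) relevant times, the reachable set from $x_0 \equiv 0$ is exactly the span appearing in \eqref{CtlLeq}. Since for arbitrary $x_0$ the reachable set is a translate of this span, \ref{CtlTKalman1} $\Leftrightarrow$ \ref{CtlT0}.

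The core of the argument is \ref{CtlTKalman1} $\Rightarrow$ \ref{CtlTEps} (together with its $L^p$ counterpart \ref{CtlTKalman1} $\Rightarrow$ \ref{CtlTLp}), handled by a single explicit construction. From \ref{CtlTKalman1} I extract a family $(\mathbf n^k, w_k)_{k=1}^d$ with $\Lambda \cdot \mathbf n^k \leq T$ such that the vectors $v_k := \widehat\Xi^\Lambda_{[\mathbf n^k]} B w_k$ form a basis of $\mathbb C^d$, and let $M \in \mathcal M_d(\mathbb C)$ have the $v_k$ as columns. Because $\{\Lambda \cdot \mathbf n : \mathbf n \in \mathbb N^N\} \cap [0, T+1]$ is finite, one can pick $\varepsilon_0 > 0$ small enough so that, for every $\varepsilon \in (0, \varepsilon_0)$, the intervals $[T - \Lambda \cdot \mathbf n^k, T - \Lambda \cdot \mathbf n^k + \varepsilon]$ over distinct classes are pairwise disjoint and no $\Lambda \cdot \mathbf n$ lies in $(T, T + \varepsilon]$. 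Defining $u(T - \Lambda \cdot \mathbf n^k + s) = \phi_k(s) w_k$ for $s \in [0, \varepsilon]$, $k \in \llbracket 1, d\rrbracket$, and $u = 0$ elsewhere, substitution into \eqref{ExplicitSolHat} yields
\[
x(T + s) = y(s) + M \phi(s), \qquad s \in [0, \varepsilon],
\]
where $y(s)$ is the $x_0$-contribution and $\phi(s) = (\phi_1(s), \dotsc, \phi_d(s))^\transp$. Setting $\phi(s) = M^{-1}(x_1(s) - y(s))$ solves the problem, and if $x_0, x_1 \in L^p$ then $y$, $\phi$, and $u$ are all in $L^p$, so Remark \ref{RemkEqualAE} gives $x \in L^p((-\Lambda_{\max}, T+\varepsilon), \mathbb C^d)$ with the required a.e.\ equality.

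The remaining implications close the cycle. For \ref{CtlTEps} $\Rightarrow$ \ref{CtlT0}, apply \ref{CtlTEps} with $x_1 \equiv c$ constant and evaluate at $s = 0$. For \ref{CtlTLp} $\Rightarrow$ \ref{CtlTKalman1}, apply \ref{CtlTLp} with $x_0 \equiv 0$ and $x_1 \equiv c$ for arbitrary $c \in \mathbb C^d$; choosing $\varepsilon > 0$ small enough that $\{[\mathbf n] : \Lambda \cdot \mathbf n \leq T + \varepsilon\} = \{[\mathbf n] : \Lambda \cdot \mathbf n \leq T\}$, \eqref{ExplicitSolHat} forces $x(T+s)$ to lie in the span \eqref{CtlLeq} for a.e.\ $s \in [0, \varepsilon]$, hence $c$ lies in that span. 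The main obstacle is the construction in \ref{CtlTKalman1} $\Rightarrow$ \ref{CtlTEps}: the choice of $\varepsilon_0$ must simultaneously ensure that the supports of the $u$-blocks do not overlap across distinct classes and that no new index $[\mathbf n]$ with $\Lambda \cdot \mathbf n \in (T, T + \varepsilon]$ contaminates $x(T + s)$; both issues are resolved by the local finiteness of $\{\Lambda \cdot \mathbf n : \mathbf n \in \mathbb N^N\}$.
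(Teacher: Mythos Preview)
Your approach is essentially the same as the paper's: both read everything off \eqref{ExplicitSolHat}, both prove \ref{CtlTKalman1} $\Rightarrow$ \ref{CtlTEps}/\ref{CtlTLp} by an explicit construction of $u$ supported on pairwise disjoint subintervals of $[0,T+\varepsilon]$ indexed by classes $[\mathbf n]$ with $\Lambda\cdot\mathbf n\le T$, and both close with the constant-target argument for \ref{CtlTLp} $\Rightarrow$ \ref{CtlTKalman1}. The choice of $\varepsilon_0$ and the use of local finiteness of $\{\Lambda\cdot\mathbf n\}$ are identical in spirit.

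There is one small slip in your construction for \ref{CtlTKalman1} $\Rightarrow$ \ref{CtlTEps}. Nothing guarantees that the $d$ pairs $(\mathbf n^k,w_k)$ you extract lie in \emph{distinct} equivalence classes; when $m>1$ it can happen that several basis vectors $v_k=\widehat\Xi^\Lambda_{[\mathbf n^k]}Bw_k$ come from the same class $[\mathbf n]$ (e.g.\ $N=1$, $A_1=0$, $B=\id_d$). Your prescription $u(T-\Lambda\cdot\mathbf n^k+s)=\phi_k(s)w_k$ is then inconsistent, since it assigns different values to $u$ at the same point. The fix is immediate: on each interval associated with a class $[\mathbf n]$, set $u(T-\Lambda\cdot\mathbf n+s)=\sum_{k:\,[\mathbf n^k]=[\mathbf n]}\phi_k(s)w_k$, and the computation $x(T+s)=y(s)+M\phi(s)$ goes through unchanged by linearity. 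The paper sidesteps this by working with the full block matrix $\bigl(\widehat\Xi^\Lambda_{[\mathbf n]}B\bigr)_{[\mathbf n]\in\mathcal N^T}$ and a right inverse $M\in\mathcal M_{mn_T,d}(\mathbb C)$, so that the control values $U_{[\mathbf n]}(s)\in\mathbb C^m$ are indexed directly by classes rather than by basis vectors; this is slightly cleaner but equivalent.
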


\begin{proof}
For $T > 0$, let $\mathcal N^T = \{[\mathbf n] \in \mathcal N_\Lambda \suchthat \Lambda \cdot \mathbf n \leq T\}$ and $n_T = \# \mathcal N^T$. The proof is carried out as follows. Clearly, \ref{CtlTEps} $\implies$ \ref{CtlT0}. We will show the equivalences by proving that \ref{CtlT0} $\implies$ \ref{CtlTKalman1}, \ref{CtlTKalman1} $\implies$ \ref{CtlTEps} and \ref{CtlTLp}, and \ref{CtlTLp} $\implies$ \ref{CtlTKalman1}.

Assume that \ref{CtlT0} is satisfied, which shows, using \eqref{ExplicitSolHat} and considering a zero initial condition, that, for every $x_1 \in \mathbb C^d$, there exists $u:[0, T] \to \mathbb C^m$ such that
\begin{equation}
\label{EqBImpliesA}
\begin{pmatrix}\widehat\Xi^\Lambda_{[\mathbf n]} B\end{pmatrix}_{[\mathbf n] \in \mathcal N^T} \begin{pmatrix}u(T - \Lambda \cdot \mathbf n)\end{pmatrix}_{[\mathbf n] \in \mathcal N^T} = \sum_{[\mathbf n] \in \mathcal N^T} \widehat\Xi^\Lambda_{[\mathbf n]} B u(T - \Lambda \cdot \mathbf n) = x_1,
\end{equation}
where $\begin{pmatrix}\widehat\Xi^\Lambda_{[\mathbf n]} B\end{pmatrix}_{[\mathbf n] \in \mathcal N^T}$ denotes the $d \times m n_T$ matrix composed of the $n_T$ blocks $\widehat\Xi^\Lambda_{[\mathbf n]} B$ of size $d \times m$ and $\begin{pmatrix}u(T - \Lambda \cdot \mathbf n)\end{pmatrix}_{[\mathbf n] \in \mathcal N^T}$ denotes the $m n_T \times 1$ matrix composed of the $n_T$ blocks $u(T - \Lambda \cdot \mathbf n)$ of size $m \times 1$. This means that the map $\mathbb C^{m n_T} \ni U \mapsto \begin{pmatrix}\widehat\Xi^\Lambda_{[\mathbf n]} B\end{pmatrix}_{[\mathbf n] \in \mathcal N^T} U \in \mathbb C^d$ is surjective, and thus \ref{CtlTKalman1} is satisfied.

Assume now that \ref{CtlTKalman1} is satisfied and let
\[\varepsilon_0 = \min\left\{\min_{\substack{[\mathbf n^\prime], [\mathbf n] \in \mathcal N^T \\ [\mathbf n^\prime] \not = [\mathbf n]}} \abs{\Lambda \cdot \mathbf n - \Lambda \cdot \mathbf n^\prime}, \min_{\substack{\mathbf n \in \mathbb N^N \\ \Lambda \cdot \mathbf n > T}} (\Lambda \cdot \mathbf n - T)\right\} > 0.\]
Let $\varepsilon \in (0, \varepsilon_0)$, $x_0: [-\Lambda_{\max}, 0) \to \mathbb C^d$, and $x_1: [0, \varepsilon] \to \mathbb C^d$. Thanks to \ref{CtlTKalman1}, the map $\mathbb C^{m n_T} \ni U \mapsto \begin{pmatrix}\widehat\Xi^\Lambda_{[\mathbf n]} B\end{pmatrix}_{[\mathbf n] \in \mathcal N^T} U \in \mathbb C^d$ is surjective, and hence the $d \times m n_T$ matrix $\begin{pmatrix}\widehat\Xi^\Lambda_{[\mathbf n]} B\end{pmatrix}_{[\mathbf n] \in \mathcal N^T}$ admits a right inverse $M \in \mathcal M_{m n_T, d}(\mathbb C)$. Let $U = \left(U_{[\mathbf n]}\right)_{[\mathbf n] \in \mathcal N^T}: [0, \varepsilon] \to \mathbb C^{m n_T} = \left(\mathbb C^m\right)^{\mathcal N^T}$ be given by
\begin{equation}
\label{EqDefiVt}
U(t) = M \left(x_1(t) - \sum_{\substack{(\mathbf n, j) \in \mathbb N^N \times \llbracket 1, N\rrbracket \\ -\Lambda_j \leq T+t - \Lambda \cdot \mathbf n < 0}} \Xi_{\mathbf n - e_j} A_j x_0(T+t - \Lambda \cdot \mathbf n)\right).
\end{equation}
Define $u: [0, T+\varepsilon] \to \mathbb C^m$ by
\begin{equation}
\label{EqDefiv}
u(t) = 
\begin{dcases*}
U_{[\mathbf n]}(\Lambda \cdot \mathbf n + t - T), & if $t \in [T - \Lambda \cdot \mathbf n, T - \Lambda \cdot \mathbf n + \varepsilon]$ for some $[\mathbf n] \in \mathcal N^T$, \\
0, & otherwise.
\end{dcases*}
\end{equation}
Thanks to the definition of $\varepsilon_0$, $u$ is well-defined, and one has $u(T + t - \Lambda \cdot \mathbf n) = U_{[\mathbf n]}(t)$ for every $[\mathbf n] \in \mathcal N^T$ and $t \in [0, \varepsilon]$. Hence, it follows from \eqref{EqDefiVt} that, for every $t \in [0, \varepsilon]$,
\begin{multline}
\label{ExplU1T}
x_1(t) - \sum_{\substack{(\mathbf n, j) \in \mathbb N^N \times \llbracket 1, N\rrbracket \\ -\Lambda_j \leq T+t - \Lambda \cdot \mathbf n < 0}} \Xi_{\mathbf n - e_j} A_j x_0(T+t - \Lambda \cdot \mathbf n) = \begin{pmatrix}\widehat\Xi^\Lambda_{[\mathbf n]} B\end{pmatrix}_{[\mathbf n] \in \mathcal N^T} \begin{pmatrix}u(T + t - \Lambda \cdot \mathbf n)\end{pmatrix}_{[\mathbf n] \in \mathcal N^T} \displaybreak[0] \\
= \sum_{[\mathbf n] \in \mathcal N^T} \widehat\Xi^\Lambda_{[\mathbf n]} B u(T + t - \Lambda \cdot \mathbf n) = \sum_{\substack{[\mathbf n] \in \mathcal N_\Lambda \\ \Lambda \cdot \mathbf n \leq T + t}} \widehat\Xi^\Lambda_{[\mathbf n]} B u(T + t - \Lambda \cdot \mathbf n), 
\end{multline}
where we use that, thanks to the definition of $\varepsilon_0$, one has
\begin{equation}
\label{EqNTeqNTt}
\mathcal N^T = \{[\mathbf n] \in \mathcal N_\Lambda \suchthat \Lambda \cdot \mathbf n \leq T + t\}, \qquad \forall t \in [0, \varepsilon].
\end{equation}
It now follows from \eqref{ExplicitSolHat} and \eqref{ExplU1T} that the solution $x$ of $\Sigma(A, B, \Lambda)$ with initial condition $x_0$ and control $u$ satisfies $\left.x(T + \cdot)\right|_{[0, \varepsilon]} = x_1$, and hence \ref{CtlTEps} holds. Notice moreover that, if we assume $x_0 \in L^p((-\Lambda_{\max}, 0), \mathbb C^d)$ and $x_1 \in L^p((0, \varepsilon), \mathbb C^d)$, it follows from \eqref{EqDefiVt} that $U \in L^p((0, \varepsilon), \mathbb C^{m n_T})$, and thus, by \eqref{EqDefiv}, $u \in L^p((0, T+\varepsilon), \mathbb C^m)$. Hence, the solution $x$ of $\Sigma(A, B, \Lambda)$ with initial condition $x_0$ and control $u$ satisfies $x \in L^p((-\Lambda_{\max}, T + \varepsilon), \mathbb C^d)$, thanks to Remark \ref{RemkEqualAE}, and $\left.x(T + \cdot)\right|_{[0, \varepsilon]} = x_1$, which shows that \ref{CtlTLp} also holds.

Finally, assume that \ref{CtlTLp} holds, take $\varepsilon_0 > 0$ as in \ref{CtlTLp} and fix $\varepsilon \in (0, \varepsilon_0)$. Then, considering a zero initial condition, for every constant final state $x_1 \in \mathbb C^d$, there exists $u \in L^p((0, T+\varepsilon), \mathbb C^m)$ such that, for almost every $t \in (0, \varepsilon)$, one has, as in \eqref{EqBImpliesA},
\[
\begin{pmatrix}\widehat\Xi^\Lambda_{[\mathbf n]} B\end{pmatrix}_{[\mathbf n] \in \mathcal N^T} \begin{pmatrix}u(T + t - \Lambda \cdot \mathbf n)\end{pmatrix}_{[\mathbf n] \in \mathcal N^T} = x_1,
\]
where we use that \eqref{EqNTeqNTt} holds, up to choosing a smaller $\varepsilon \in (0, \varepsilon_0)$. Hence, as in \eqref{EqBImpliesA}, one also obtains that the map $\mathbb C^{m n_T} \ni U \mapsto \begin{pmatrix}\widehat\Xi^\Lambda_{[\mathbf n]} B\end{pmatrix}_{[\mathbf n] \in \mathcal N^T} U \in \mathbb C^d$ is surjective, and thus \ref{CtlTKalman1} is satisfied.
\end{proof}

The next result presents a relative controllability criterion for $\mathcal C^k$ solutions of $\Sigma(A, B, \Lambda)$, which is slightly different from \ref{CtlTKalman1} in Theorem \ref{TheoControlT1} due to the compatibility condition \eqref{CompatibilityCk} required for the existence of $\mathcal C^k$ solutions.

\begin{theorem}
\label{TheoControlT2}
Let $A = (A_1, \dotsc, A_N) \in \mathcal M_d(\mathbb C)^N$, $B \in \mathcal M_{d, m}(\mathbb C)$, $\Lambda = (\Lambda_1, \dotsc, \Lambda_N) \in (0, +\infty)^N$, $T > 0$, and $k \in \mathbb N$. Define $\widehat\Xi^\Lambda_{[\mathbf n]}$ as in \eqref{EqDefiXiHat}. Then the following assertions are equivalent.
\begin{enumerate}
\item\label{CtlTKalman2} One has
\begin{equation}
\label{CtlLess}
\Span \left\{\widehat\Xi^\Lambda_{[\mathbf n]} B w \midsuchthat [\mathbf n] \in \mathcal N_\Lambda,\; \Lambda \cdot \mathbf n < T,\; w \in \mathbb C^m\right\} = \mathbb C^d.
\end{equation}
\item\label{CtlTCk0} For every $x_0$ $\mathcal C^k$-admissible for $\Sigma(A, B, \Lambda)$ and $x_1 \in \mathbb C^d$, there exists $u \in \mathcal C^k([0, T], \mathbb C^m)$ such that the solution $x$ of $\Sigma(A, B, \Lambda)$ with initial condition $x_0$ and control $u$ satisfies $x \in \mathcal C^k([-\Lambda_{\max}, T],\allowbreak \mathbb C^d)$ and $x(T) = x_1$.
\item\label{CtlTCkEps} There exists $\varepsilon_0 > 0$ such that, for every $\varepsilon \in (0, \varepsilon_0)$, $x_0$ $\mathcal C^k$-admissible for $\Sigma(A, B, \Lambda)$, and $x_1 \in \mathcal C^k([0, \varepsilon], \mathbb C^d)$, there exists $u \in \mathcal C^k([0, T+\varepsilon], \mathbb C^m)$ such that the solution $x$ of $\Sigma(A, B, \Lambda)$ with initial condition $x_0$ and control $u$ satisfies $x \in \mathcal C^k([-\Lambda_{\max}, T + \varepsilon], \mathbb C^d)$ and $\left.x(T + \cdot)\right|_{[0, \varepsilon]} = x_1$.
\end{enumerate}
\end{theorem}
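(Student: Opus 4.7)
The plan is to mirror the scheme of the proof of Theorem \ref{TheoControlT1}, establishing \ref{CtlTCkEps} $\Rightarrow$ \ref{CtlTCk0} $\Rightarrow$ \ref{CtlTKalman2} $\Rightarrow$ \ref{CtlTCkEps} while carefully tracking the compatibility constraint \eqref{CompatibilityCk}. The first implication is immediate: apply \ref{CtlTCkEps} to the constant $\mathcal{C}^k$ target $\widetilde x_1(t) \equiv x_1$ and restrict the resulting control to $[0, T]$, which is still $\mathcal{C}^k$, observing that by \eqref{ExplicitSolHat} the value $x(T)$ depends only on $u|_{[0,T]}$.

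For \ref{CtlTCk0} $\Rightarrow$ \ref{CtlTKalman2} I would take $x_0 \equiv 0$, which is trivially $\mathcal{C}^k$-admissible. By Remark \ref{RemkCk}, the compatibility condition \eqref{CompatibilityCk} forces $B u^{(r)}(0) = 0$ for every $r \in \llbracket 0, k \rrbracket$, and in particular $B u(0) = 0$. Substituting into \eqref{ExplicitSolHat} at time $T$, every term indexed by $[\mathbf n]$ with $\Lambda \cdot \mathbf n = T$ vanishes, so the arbitrary target $x_1 = x(T)$ is written as a linear combination of vectors $\widehat\Xi^\Lambda_{[\mathbf n]} B u(T - \Lambda \cdot \mathbf n)$ with $\Lambda \cdot \mathbf n < T$, yielding \ref{CtlTKalman2}.

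The core of the proof is the constructive implication \ref{CtlTKalman2} $\Rightarrow$ \ref{CtlTCkEps}, for which I plan the following. Pick $\varepsilon_0 > 0$ as in the proof of Theorem \ref{TheoControlT1}, further shrinking it if necessary so that the index set appearing in the initial-condition sum of \eqref{ExplicitSolHat} remains constant for $t \in [0, \varepsilon]$, which ensures this sum is $\mathcal{C}^k$ in $t$. Given $x_0$ $\mathcal{C}^k$-admissible, choose $w_r \in \mathbb C^m$ such that $B w_r = \lim_{t \to 0^-} x_0^{(r)}(t) - \sum_j A_j x_0^{(r)}(-\Lambda_j)$ for each $r \in \llbracket 0, k \rrbracket$, and set $P(t) = \sum_{r=0}^k \frac{t^r}{r!} w_r$. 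If some $[\mathbf n_T] \in \mathcal N_\Lambda$ satisfies $\Lambda \cdot \mathbf n_T = T$, the value of $u$ on $[0, \varepsilon]$ enters the formula for $x(T + t)$ only through the single term $\widehat\Xi^\Lambda_{[\mathbf n_T]} B u(t)$, which I treat as a prescribed inhomogeneity by imposing $u \equiv P$ on $[0, \varepsilon]$. Let $M$ be a right inverse of the surjective matrix $\bigl(\widehat\Xi^\Lambda_{[\mathbf n]} B\bigr)_{[\mathbf n]\in\mathcal N_\Lambda,\, \Lambda \cdot \mathbf n < T}$ (which exists by \ref{CtlTKalman2}), and define the $\mathcal{C}^k$ function $V(t) = M \bigl(x_1(t) - (\text{initial-condition term at } T + t) - \widehat\Xi^\Lambda_{[\mathbf n_T]} B P(t)\bigr)$ on $[0, \varepsilon]$. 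I then prescribe $u$ on each interval $[T - \Lambda \cdot \mathbf n, T - \Lambda \cdot \mathbf n + \varepsilon]$ with $\Lambda \cdot \mathbf n < T$ via the corresponding shifted component of $V$. By the choice of $\varepsilon_0$, these intervals are pairwise disjoint and disjoint from $[0, \varepsilon]$, and I extend $u$ across the positive-length gaps by a standard Hermite interpolation matching endpoint Taylor data up to order $k$, producing $u \in \mathcal{C}^k([0, T + \varepsilon], \mathbb{C}^m)$. The identity $u^{(r)}(0) = P^{(r)}(0) = w_r$ combined with Remark \ref{RemkCk} yields $x \in \mathcal{C}^k([-\Lambda_{\max}, T+\varepsilon], \mathbb C^d)$, and direct substitution in \eqref{ExplicitSolHat} gives $x(T + t) = x_1(t)$ for $t \in [0, \varepsilon]$.

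The main obstacle is coordinating two essentially independent constraints on $u$: the $\mathcal{C}^k$-compatibility at $t = 0$ pins down the jets of $u$ at $0$, while the controllability target pins down the values of $u$ on a disjoint family of intervals, and the resulting piecewise prescription must still be assembled into a single globally $\mathcal{C}^k$ function. The strict inequality $\Lambda \cdot \mathbf n < T$ in \eqref{CtlLess} is precisely what makes this possible: it excludes the $[\mathbf n_T]$ contribution, which is entirely determined by compatibility, from the span that must absorb the remaining controllability requirement.
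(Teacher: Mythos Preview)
Your proposal is correct and follows essentially the same scheme as the paper's proof: the implications \ref{CtlTCkEps} $\Rightarrow$ \ref{CtlTCk0} $\Rightarrow$ \ref{CtlTKalman2} are handled identically (zero initial condition, compatibility kills the $\Lambda\cdot\mathbf n = T$ term), and the constructive step \ref{CtlTKalman2} $\Rightarrow$ \ref{CtlTCkEps} uses the same right-inverse idea with the $\Lambda\cdot\mathbf n = T$ contribution treated as a known inhomogeneity determined by the compatibility data.

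The only noteworthy differences are in the $\mathcal C^k$ gluing mechanics. The paper takes $\mu$ with compact support in $[0,\varepsilon)$ and extends each $U_{[\mathbf n]}$ to a compactly supported $\mathcal C^k$ function on $\bigl(-\tfrac{\varepsilon}{2},\tfrac{3\varepsilon}{2}\bigr)$, so that setting $u=0$ in the gaps already yields a global $\mathcal C^k$ control; you instead use a polynomial $P$ for the jet at $0$ and Hermite interpolation across the gaps. Both work. One small imprecision: the $\varepsilon_0$ from Theorem~\ref{TheoControlT1} does not by itself guarantee that the intervals $[T-\Lambda\cdot\mathbf n,\,T-\Lambda\cdot\mathbf n+\varepsilon]$ with $\Lambda\cdot\mathbf n<T$ stay disjoint from $[0,\varepsilon]$ when $T\notin\{\Lambda\cdot\mathbf n\}$; you should also require $\varepsilon_0 \le \tfrac{1}{2}\min_{\Lambda\cdot\mathbf n\neq T}\lvert\Lambda\cdot\mathbf n - T\rvert$, exactly as the paper does.
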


\begin{proof}
Let $\mathcal N^T_\ast = \{[\mathbf n]_\Lambda \in \mathcal N_\Lambda \suchthat \Lambda \cdot \mathbf n < T\}$ and $n_T^\ast = \# \mathcal N^T_\ast$. We begin the proof by noticing that \ref{CtlTCkEps} implies \ref{CtlTCk0}. Assume now that \ref{CtlTCk0} holds and let us show that \ref{CtlTKalman2} is satisfied. For every $x_1 \in \mathbb C^d$, there exists $u \in \mathcal C^k([0, T], \mathbb C^m)$ such that the solution $x$ of $\Sigma(A, B, \Lambda)$ with zero initial condition and control $u$ satisfies $x \in \mathcal C^k([-\Lambda_{\max}, T], \mathbb C^d)$ and, from \eqref{ExplicitSolHat},
\begin{equation}
\label{EqSurjectiveCk}
\sum_{\substack{[\mathbf n] \in \mathcal N_\Lambda \\ \Lambda \cdot \mathbf n \leq T}} \widehat\Xi^\Lambda_{[\mathbf n]} B u(T - \Lambda \cdot \mathbf n) = x_1.
\end{equation}
Moreover, since $x \in \mathcal C^k([-\Lambda_{\max}, T], \mathbb C^d)$, it follows from Remark \ref{RemkCk} that \eqref{CompatibilityCk} is satisfied, and thus, for every $r \in \llbracket 0, k\rrbracket$, $B u^{(r)}(0) = 0$. Thus \eqref{EqSurjectiveCk} becomes
\begin{equation*}
\sum_{\substack{[\mathbf n] \in \mathcal N_\Lambda \\ \Lambda \cdot \mathbf n < T}} \widehat\Xi^\Lambda_{[\mathbf n]} B u(T - \Lambda \cdot \mathbf n) = x_1,
\end{equation*}
and we conclude, as in the proof of Theorem \ref{TheoControlT1}, that $\mathbb C^{m n_T^\ast} \ni U \mapsto \begin{pmatrix}\widehat\Xi^\Lambda_{[\mathbf n]} B\end{pmatrix}_{[\mathbf n] \in \mathcal N^T_\ast} U \in \mathbb C^d$ is surjective, and thus \ref{CtlTKalman2} is satisfied.

Finally, assume that \ref{CtlTKalman2} is satisfied and let
\[\varepsilon_0 = \frac{1}{2}\min\left\{\min_{\substack{[\mathbf n^\prime], [\mathbf n] \in \mathcal N^T_\ast \\ [\mathbf n^\prime] \not = [\mathbf n]}} \abs{\Lambda \cdot \mathbf n - \Lambda \cdot \mathbf n^\prime}, \min_{\substack{\mathbf n \in \mathbb N^N \\ \Lambda \cdot \mathbf n \not = T}} \abs{\Lambda \cdot \mathbf n - T}\right\} > 0.\]
Let $\varepsilon \in (0, \varepsilon_0)$, $x_0$ $\mathcal C^k$-admissible for $\Sigma(A, B, \Lambda)$, and $x_1 \in \mathcal C^k([0, \varepsilon], \mathbb C^d)$. Since $x_0$ is $\mathcal C^k$-admissible, there exists $\mu \in \mathcal C^k([0, \varepsilon], \mathbb C^m)$, with a compact support inside $[0, \varepsilon)$, such that, for every $r \in \llbracket 0, k\rrbracket$,
\begin{equation}
\label{EqDefiMu}
\lim_{t \to 0} x_0^{(r)}(t) = \sum_{j=1}^N A_j x_0^{(r)}(-\Lambda_j) + B \mu^{(r)}(0).
\end{equation}
If $T = \Lambda \cdot \mathbf n$ for some $\mathbf n \in \mathbb N^N$, we set $\delta_T = 1$ and $\tau = [\mathbf n]$; otherwise, we set $\delta_T = 0$ and $\tau = [0]$. As in the proof of Theorem \ref{TheoControlT1}, it follows from \ref{CtlTKalman2} that the $d \times m n_T^\ast$ matrix $\begin{pmatrix}\widehat\Xi^\Lambda_{[\mathbf n]} B\end{pmatrix}_{[\mathbf n] \in \mathcal N^T_\ast}$ admits a right inverse $M \in \mathcal M_{m n_T^\ast, d}(\mathbb C)$. Let $U = \left(U_{[\mathbf n]}\right)_{[\mathbf n] \in \mathcal N^T_\ast}: [0, \varepsilon] \to \mathbb C^{m n_T^\ast} = \left(\mathbb C^m\right)^{\mathcal N^T_\ast}$ be given by
\begin{equation}
\label{EqDefiVt2}
U(t) = M \left(x_1(t) - \sum_{\substack{(\mathbf n, j) \in \mathbb N^N \times \llbracket 1, N\rrbracket \\ -\Lambda_j \leq T+t - \Lambda \cdot \mathbf n < 0}} \Xi_{\mathbf n - e_j} A_j x_0(T+t - \Lambda \cdot \mathbf n) - \delta_T \widehat\Xi^\Lambda_{\tau} B \mu(t)\right).
\end{equation}
Notice that the sum in \eqref{EqDefiVt2} can be taken over the set
\[G_1(t) = \{(\mathbf n = (n_1, \dotsc, n_N), j) \in \mathbb N^N \times \llbracket 1, N\rrbracket \suchthat -\Lambda_j \leq T + t - \Lambda \cdot \mathbf n < 0,\; n_j \geq 1\},\]
since $\Xi_{\mathbf n} = 0$ if $\mathbf n \in \mathbb Z^N \setminus \mathbb N^N$. Moreover, thanks to the definition of $\varepsilon_0$, one has $G_1(t) = G_1(0)$ for every $t \in [0, \varepsilon]$, and thus $U$ can be written for $t \in [0, \varepsilon]$ as
\[
U(t) = M \left(x_1(t) - \sum_{\substack{(\mathbf n, j) \in \mathbb N^N \times \llbracket 1, N\rrbracket \\ -\Lambda_j \leq T - \Lambda \cdot \mathbf n < 0}} \Xi_{\mathbf n - e_j} A_j x_0(T+t - \Lambda \cdot \mathbf n) - \delta_T \widehat\Xi^\Lambda_{\tau} B \mu(t)\right).
\]
In particular, one obtains that $U \in \mathcal C^k([0, \varepsilon], \mathbb C^{m n_T^\ast})$. We extend $U$ into a $\mathcal C^k$ function on the interval $\left[-\frac{\varepsilon}{2}, \frac{3 \varepsilon}{2}\right]$ with a compact support in $\left(-\frac{\varepsilon}{2}, \frac{3 \varepsilon}{2}\right)$. Define $u: [0, T+\varepsilon] \to \mathbb C^m$ by
\begin{equation*}
u(t) = 
\begin{dcases*}
U_{[\mathbf n]}(\Lambda \cdot \mathbf n + t - T), & if $t \in \left[T - \Lambda \cdot \mathbf n - \frac{\varepsilon}{2}, T - \Lambda \cdot \mathbf n + \frac{3 \varepsilon}{2}\right]$ for some $[\mathbf n] \in \mathcal N^T_\ast$, \\
\mu(t), & if $t \in [0, \varepsilon]$, \\
0, & otherwise,
\end{dcases*}
\end{equation*}
which is well-defined thanks to the choice of $\varepsilon_0$, and satisfies $u \in \mathcal C^k([0, T + \varepsilon], \mathbb C^m)$ thanks to the construction of $U$ and $\mu$. Moreover, one has $u(T + t - \Lambda \cdot \mathbf n) = U_{[\mathbf n]}(t)$ for every $[\mathbf n] \in \mathcal N^T_\ast$ and, thanks to \eqref{EqDefiMu}, it follows from Remark \ref{RemkCk} that the unique solution $x$ of $\Sigma(A, B, \Lambda)$ with initial condition $x_0$ and control $u$ satisfies $x \in \mathcal C^k([-\Lambda_{\max}, T + \varepsilon], \mathbb C^d)$. It follows from \eqref{EqDefiVt2} that, for every $t \in [0, \varepsilon]$,
\begin{align*}
 & x_1(t) - \sum_{\substack{(\mathbf n, j) \in \mathbb N^N \times \llbracket 1, N\rrbracket \\ -\Lambda_j \leq T+t - \Lambda \cdot \mathbf n < 0}} \Xi_{\mathbf n - e_j} A_j x_0(T+t - \Lambda \cdot \mathbf n) \displaybreak[0] \\
{} = {} & \delta_T \widehat\Xi^\Lambda_{\tau} B \mu(t) + \begin{pmatrix}\widehat\Xi^\Lambda_{[\mathbf n]} B\end{pmatrix}_{[\mathbf n] \in \mathcal N^T_\ast} \begin{pmatrix}u(T + t - \Lambda \cdot \mathbf n)\end{pmatrix}_{[\mathbf n] \in \mathcal N^T_\ast} \displaybreak[0] \\
{} = {} & \sum_{\substack{[\mathbf n] \in \mathcal N_\Lambda \\ \Lambda \cdot \mathbf n \leq T}} \widehat\Xi^\Lambda_{[\mathbf n]} B u(T + t - \Lambda \cdot \mathbf n) = \sum_{\substack{[\mathbf n] \in \mathcal N_\Lambda \\ \Lambda \cdot \mathbf n \leq T + t}} \widehat\Xi^\Lambda_{[\mathbf n]} B u(T + t - \Lambda \cdot \mathbf n),
\end{align*}
and thus one obtains that the solution $x$ of $\Sigma(A, B, \Lambda)$ with initial condition $x_0$ and control $u$ satisfies $\left.x(T + \cdot)\right|_{[0, \varepsilon]} = x_1$, which shows that \ref{CtlTCkEps} holds.
\end{proof}

\begin{remark}
\label{RemkKalman}
When $N = 1$, the controlled difference equation \eqref{MainSyst} becomes $x(t) = A x(t - \Lambda) + B u(t)$, with $A = A_1$ and $\Lambda = \Lambda_1$. It follows from Definitions \ref{DefiXi} and \ref{DefiXiHat} that, for $\mathbf n = n \in \mathbb N$, one has $\widehat\Xi^\Lambda_{[\mathbf n]} = A^n$, and thus condition \ref{CtlTKalman1} from Theorem \ref{TheoControlT1} reduces to $\rank \begin{pmatrix}B & AB & A^2 B & \cdots & A^{\floor{T/\Lambda}} B\end{pmatrix} = d$, which is the usual Kalman condition for controllability of discrete-time linear systems (see, e.g., \cite[Theorem 2]{Sontag1998Mathematical}). Moreover, condition \ref{CtlTKalman2} from Theorem \ref{TheoControlT2} reduces to $\rank \begin{pmatrix}B & AB & A^2 B & \cdots & A^{\ceil{T/\Lambda}-1} B\end{pmatrix} = d$, which is the same as the previous one when $T/\Lambda \notin \mathbb N^\ast$.
\end{remark}

Notice that \ref{CtlT0}, \ref{CtlTEps}, and \ref{CtlTLp} from Theorem \ref{TheoControlT1} and \ref{CtlTCk0} and \ref{CtlTCkEps} from Theorem \ref{TheoControlT2} could all be used to define relative controllability in different function spaces. Motivated by the equivalences established in Theorems \ref{TheoControlT1} and \ref{TheoControlT2}, we provide the following definition.

\begin{definition}
\label{DefiRelativeControl}
Let $A = (A_1, \dotsc, A_N) \in \mathcal M_d(\mathbb C)^N$, $B \in \mathcal M_{d, m}(\mathbb C)$, $\Lambda \in (0, +\infty)^N$, and $T > 0$.
\begin{enumerate}
\item We say that $\Sigma(A, B, \Lambda)$ is \emph{relatively controllable} in time $T$ if
\[
\Span \left\{\widehat\Xi^\Lambda_{[\mathbf n]} B w \midsuchthat [\mathbf n] \in \mathcal N_\Lambda,\; \Lambda \cdot \mathbf n \leq T,\; w \in \mathbb C^m\right\} = \mathbb C^d.
\]
\item If $\Sigma(A, B, \Lambda)$ is relatively controllable in some time $T > 0$, we define the \emph{minimal controllability time} $T_{\min}$ for $\Sigma(A, B, \Lambda)$ by $T_{\min} = \inf\{T > 0 \suchthat \Sigma(A, B, \Lambda) \text{ is relatively} \allowbreak\text{controllable in time }T\}$.
\end{enumerate}
\end{definition}

\begin{remark}
Contrarily to the situation for linear control systems of the form $\dot x(t) = A x(t) + B u(t)$ or $x(t) = A x(t - 1) + B u(t)$, relative controllability for some time $T > 0$ does not imply stabilizability by a linear feedback law. Indeed \cite[Theorem 3.1]{Hale2002Strong} proves that $\Sigma(A, B, \Lambda)$ can be strongly stabilized by a linear feedback law $u(t) = \sum_{j=1}^N K_j x(t - \Lambda_j)$ if and only if there exists $\varepsilon > 0$ such that, for every $\lambda \in \mathbb C$ with $\Real \lambda \geq -\varepsilon$, one has
\begin{equation}
\label{HaleLunelHautus}
\rank \begin{pmatrix}
B & \displaystyle \id_d - \sum_{j=1}^N A_j e^{-\lambda \Lambda_j}
\end{pmatrix} = d.
\end{equation}
For $N = d = 2$ and $m = 1$, consider the system $\Sigma(A, B, \Lambda)$ with $A = (A_1, A_2)$, $B$, and $\Lambda = (\Lambda_1, \Lambda_2)$ given by
\begin{align*}
A_1 & {} = \begin{pmatrix}
\alpha & -\alpha^{1-\ell} \\
0 & 0 \\
\end{pmatrix}, & A_2 & {} = \begin{pmatrix}
0 & 1 \\
0 & 0 \\
\end{pmatrix}, & B & {} = \begin{pmatrix}
0 \\
1 \\
\end{pmatrix}, \displaybreak[0] \\
\Lambda_1 & {} = 1, & \Lambda_2 & {} = \ell, & &
\end{align*}
with $\ell \in (0, 1)$ and $\alpha > 1$. Clearly, $\Sigma(A, B, \Lambda)$ is relatively controllable in time $T \geq \ell$ since $\Span\{B,\allowbreak A_2 B\} = \mathbb C^2$. However, for $\lambda \in \mathbb C$, one has
\[\id_2 - A_1 e^{-\lambda} - A_2 e^{-\lambda \ell} = \begin{pmatrix}1 - \alpha e^{-\lambda} & \alpha^{1-\ell} e^{-\lambda} - e^{-\lambda\ell}\\ 0 & 1\end{pmatrix},\]
and the first row of this matrix is zero for $\lambda = \ln\alpha$. Hence \eqref{HaleLunelHautus} does not hold for $\lambda = \ln\alpha > 0$, which shows in particular that $\Sigma(A, B, \Lambda)$ cannot be strongly stabilized by a linear feedback law.
\end{remark}

\section{Rational dependence of the delays}
\label{SecCompRat}

This section compares relative controllability of $\Sigma(A, B, \Lambda)$ for different delay vectors $\Lambda$ in terms of their rational dependence structure. We start by recalling the definition of rational dependence and commensurability.

\begin{definition}
\label{DefRatDep}
Let $\Lambda = (\Lambda_1, \dotsc, \Lambda_N) \in \mathbb R^N$.
\begin{enumerate}
\item\label{DefRatDepA} We say that the components of $\Lambda$ are \emph{rationally dependent} if there exists $\mathbf n \in \mathbb Z^N \setminus\{0\}$ such that $\Lambda \cdot \mathbf n = 0$. Otherwise, the components of $\Lambda$ are said to be \emph{rationally independent}.

\item\label{DefRatDepB} We say that the components of $\Lambda$ are \emph{commensurable} if there exist $\lambda \in \mathbb R$ and $k \in \mathbb Z^N$ such that $\Lambda = \lambda k$.
\end{enumerate}
\end{definition}

Notice that the set $\mathbb Z^N$ can be replaced by $\mathbb Q^N$ in Definition \ref{DefRatDep} without changing the definitions of rational dependence and commensurability. We next introduce a preorder in the set of all possible delay vectors $(0, +\infty)^N$, which describes when one delay vector is ``less rationally dependent'' than another.

\begin{definition}
\label{DefiPreccurlyeq}
For $\Lambda \in (0, +\infty)^N$, we define $Z(\Lambda) = \{\mathbf n \in \mathbb Z^N \suchthat \Lambda \cdot \mathbf n = 0\}$. For $\Lambda, L \in (0, +\infty)^N$, we write $\Lambda \preccurlyeq L$ or, equivalently, $L \succcurlyeq \Lambda$, if $Z(\Lambda) \subset Z(L)$. We write $\Lambda \approx L$ if $\Lambda \preccurlyeq L$ and $L \preccurlyeq \Lambda$.
\end{definition}

If $\Lambda \in (0, +\infty)^N$ has rationally independent components, then one immediately computes $Z(\Lambda) = \{0\}$, and hence $\Lambda \preccurlyeq L$ for every $L \in (0, +\infty)^N$, that is, delay vectors with rationally independent components are minimal for the preorder $\preccurlyeq$. Notice also that, for $\Lambda \in (0, +\infty)^N$, the set $Z(\Lambda)$ encodes the structure of the equivalence classes $[\mathbf n]_{\Lambda}$ for $\mathbf n \in \mathbb N^N$, in the sense that, for $\mathbf n^\prime \in \mathbb N^N$, one has $\mathbf n^\prime \in [\mathbf n]_{\Lambda}$ if and only if $\mathbf n^\prime - \mathbf n \in Z(\Lambda)$, which shows that $[\mathbf n]_{\Lambda} = \left(\mathbf n + Z(\Lambda)\right) \cap \mathbb N^N$. We recall the following result from \cite{Chitour2016Stability}.

\begin{proposition}[\citeleft\citen{Chitour2016Stability}\citeright, Proposition 3.9]
\label{Prop3-9}
Let $\Lambda = (\Lambda_1, \dotsc, \Lambda_N) \in (0, +\infty)^N$. There exist $h \in \llbracket 1, N\rrbracket$, $\ell = (\ell_1, \dotsc, \ell_h) \in (0, +\infty)^h$ with rationally independent components, and $M \in \mathcal M_{N, h}(\mathbb N)$ with $\rank M = h$ such that $\Lambda = M \ell$. Moreover, for every $M$ as before, one has
\begin{equation*}
\range M = \left\{L \in \mathbb R^N \midsuchthat \text{ for every }\mathbf n \in Z(\Lambda), \text{ one has }L \cdot \mathbf n = 0\right\}.
\end{equation*}
\end{proposition}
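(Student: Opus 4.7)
The plan is to extract $h$ from the $\mathbb Q$-rank of the delays, produce $\ell$ and $M$ via a cone-theoretic argument inside $V := \sum_{j=1}^{N} \mathbb Q \Lambda_j \subset \mathbb R$, and then identify $\range M$ with the stated subspace by combining the relation $\Lambda = M\ell$ with the rational independence of $\ell$. Setting $h = \dim_{\mathbb Q} V \in \llbracket 1, N\rrbracket$ (the lower bound coming from $\Lambda_1 > 0$), the substantive first point is to construct a $\mathbb Q$-basis $(\ell_1, \dotsc, \ell_h)$ of $V$ with each $\ell_i > 0$ and each $\Lambda_j$ a nonnegative integer combination of the $\ell_i$'s. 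The naive choice of $h$ rationally independent $\Lambda_j$'s as basis typically produces negative coefficients for the remaining delays, so more care is needed.

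To bypass this I would use the observation that the convex $\mathbb Q$-cone $C = \sum_j \mathbb Q_{\geq 0} \Lambda_j \subset V$ is \emph{pointed}: every nonzero element of $C$ has strictly positive image under the embedding $V \hookrightarrow \mathbb R$, so $C \cap (-C) = \{0\}$. Choose any $\phi_0$ in the interior of the dual cone $C^\ast$ (nonempty because $C$ is pointed and full-dimensional in $V$); then $\phi_0 > 0$ on $C \setminus \{0\}$, so the slice $P = C \cap \phi_0^{-1}(1)$ is a bounded rational polytope in the affine hyperplane $\{\phi_0 = 1\}$, sitting inside the open positive half-space $H_+ = \{v \in V : v > 0\}$. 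I enclose $P$ in a simplex $\Sigma \subset H_+ \cap \{\phi_0 = 1\}$ with $h$ vertices $\tilde\ell_1, \dotsc, \tilde\ell_h$ (standard: $P$ is a compact convex set in an $(h-1)$-dimensional affine space and $H_+ \cap \{\phi_0 = 1\}$ is an open neighborhood of $P$). Each $\Lambda_j$ then lies in $\sum_i \mathbb Q_{\geq 0} \tilde\ell_i$; clearing a common denominator $q$ and setting $\ell_i = \tilde\ell_i/q$ yields $\Lambda_j = \sum_i M_{ji} \ell_i$ with $M_{ji} \in \mathbb N$. The $\ell_i$'s remain positive and rationally independent (they still form a $\mathbb Q$-basis of $V$), and $\rank M = h$ because the rows of $M$ are the $\ell$-coordinates of the $\Lambda_j$'s, which $\mathbb Q$-span $V \cong \mathbb Q^h$.

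For the ``moreover'' statement, set $S = \{L \in \mathbb R^N \mid L \cdot \mathbf n = 0 \text{ for all } \mathbf n \in Z(\Lambda)\}$. For the inclusion $\range M \subset S$, take $z \in \mathbb R^h$ and $\mathbf n \in Z(\Lambda)$; then $(Mz) \cdot \mathbf n = z^{\transp} (M^{\transp} \mathbf n)$, and $\Lambda = M\ell$ gives $\ell^{\transp} (M^{\transp} \mathbf n) = \Lambda \cdot \mathbf n = 0$. Since $M^{\transp} \mathbf n \in \mathbb Z^h$ while the $\ell_i$'s are rationally independent, this forces $M^{\transp} \mathbf n = 0$, and hence $(Mz) \cdot \mathbf n = 0$. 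Equality follows by dimensions: $\range M$ has real dimension $h$, and $S$ is the real orthogonal complement of $\mathbb R \cdot Z(\Lambda)$, which has dimension $N-h$ because $Z(\Lambda)$ is a full-rank sublattice of the $(N-h)$-dimensional $\mathbb Q$-kernel of the surjection $\mathbb Q^N \to V$, $\mathbf n \mapsto \Lambda \cdot \mathbf n$. The main obstacle is the cone-theoretic step enforcing positivity of $\ell$ simultaneously with nonnegativity of $M$; the identification of $\range M$ is then essentially formal once $\Lambda = M\ell$ with rationally independent $\ell$ is in hand.
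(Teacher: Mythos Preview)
The paper does not give a proof of this proposition: it is quoted as Proposition~3.9 of \cite{Chitour2016Stability} and stated without argument, so there is nothing in the present paper to compare your proposal against.

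That said, your approach is sound. Two small points are worth tightening. First, you move tacitly between the $\mathbb Q$-vector space $V$ and its real extension $V \otimes_{\mathbb Q} \mathbb R$ when you speak of the interior of $C^\ast$, the compactness of $P$, and the openness of $H_+ \cap \{\phi_0 = 1\}$; none of these notions make sense over $\mathbb Q$ alone. The fix is routine (work in $V \otimes_{\mathbb Q} \mathbb R \cong \mathbb R^h$, then use density of $V$ to pick $\phi_0$ and the simplex vertices rational), but it deserves a sentence. Second, your justification for enclosing $P$ in a simplex inside $H_+ \cap \{\phi_0 = 1\}$ (``$P$ compact, $H_+ \cap \{\phi_0 = 1\}$ an open neighborhood'') is not sufficient as stated: a compact convex set inside an open convex set need not admit an intermediate simplex in general (think of two nearly concentric balls). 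What saves you here is that $H_+ \cap \{\phi_0 = 1\}$ is specifically an open \emph{half-space} in the affine hyperplane $\{\phi_0 = 1\}$ (the embedding $V \hookrightarrow \mathbb R$ extends to a nonzero linear form on $V \otimes_{\mathbb Q} \mathbb R$), and any compact set in an open half-space of $\mathbb R^{h-1}$ does sit inside some simplex contained in that half-space. Once this is said, your construction of $M$ and $\ell$ is correct, and the identification of $\range M$ with the annihilator of $Z(\Lambda)$ via $M^{\transp}\mathbf n = 0$ and dimension count is exactly the mechanism the paper relies on downstream.
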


In particular, it follows from Proposition \ref{Prop3-9} that the set of all $L \in (0, +\infty)^N$ such that $L \succcurlyeq \Lambda$ is $\range M \cap (0, +\infty)^N$. The next proposition gathers some immediate properties that follow from Definition \ref{DefiPreccurlyeq}.

\begin{proposition}
\label{PropPreorder}
Let $\Lambda, L \in (0, +\infty)^N$. If $\Lambda \preccurlyeq L$, then, for every $\mathbf n \in \mathbb N^N$, one has $[\mathbf n]_\Lambda \subset [\mathbf n]_L$ and
\begin{equation}
\label{XiHatLLambda}
\widehat\Xi^L_{[\mathbf n]} = \sum_{\substack{\tau \in \mathcal N_\Lambda \\ \tau \subset [\mathbf n]_L}} \widehat\Xi^\Lambda_{\tau}.
\end{equation}
In particular, if $\Lambda \approx L$, then, for every $\mathbf n \in \mathbb N^N$, one has $[\mathbf n]_\Lambda = [\mathbf n]_L$ and $\widehat\Xi^\Lambda_{[\mathbf n]} = \widehat\Xi^L_{[\mathbf n]}$.
\end{proposition}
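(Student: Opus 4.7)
The plan is to unpack the definitions and observe that the preorder $\preccurlyeq$ is exactly the statement that the equivalence relation $\sim_\Lambda$ refines $\sim_L$; the rest follows by grouping the defining sum of $\widehat\Xi^L_{[\mathbf n]}$ along the finer partition. Crucially, the coefficient $\Xi_{\mathbf n}$ from Definition \ref{DefiXi} depends only on $A$ and $\mathbf n$, not on the delay vector, so both $\widehat\Xi^\Lambda$ and $\widehat\Xi^L$ are obtained by summing the \emph{same} matrices $\Xi_{\mathbf n}$ over different equivalence classes.

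First I would establish the inclusion of equivalence classes. Given $\mathbf n, \mathbf n^\prime \in \mathbb N^N$ with $\mathbf n^\prime \in [\mathbf n]_\Lambda$, we have by definition $\Lambda \cdot (\mathbf n^\prime - \mathbf n) = 0$, i.e., $\mathbf n^\prime - \mathbf n \in Z(\Lambda)$. By the hypothesis $\Lambda \preccurlyeq L$, this gives $\mathbf n^\prime - \mathbf n \in Z(L)$, hence $L \cdot \mathbf n^\prime = L \cdot \mathbf n$ and $\mathbf n^\prime \in [\mathbf n]_L$. Therefore $[\mathbf n]_\Lambda \subset [\mathbf n]_L$, which in turn means that each $\sim_L$-class decomposes as a disjoint union of $\sim_\Lambda$-classes: precisely, $[\mathbf n]_L = \bigsqcup_{\tau \in \mathcal N_\Lambda,\, \tau \subset [\mathbf n]_L} \tau$.

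Second, I would apply this partition directly to \eqref{EqDefiXiHat}. Since $\Xi_{\mathbf n^\prime}$ does not depend on the delay vector, splitting the sum according to the $\sim_\Lambda$-classes contained in $[\mathbf n]_L$ gives
\[
\widehat\Xi^L_{[\mathbf n]} = \sum_{\mathbf n^\prime \in [\mathbf n]_L} \Xi_{\mathbf n^\prime} = \sum_{\substack{\tau \in \mathcal N_\Lambda \\ \tau \subset [\mathbf n]_L}} \sum_{\mathbf n^\prime \in \tau} \Xi_{\mathbf n^\prime} = \sum_{\substack{\tau \in \mathcal N_\Lambda \\ \tau \subset [\mathbf n]_L}} \widehat\Xi^\Lambda_\tau,
\]
which is the announced identity. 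The ``in particular'' statement is immediate: if $\Lambda \approx L$ then the two inclusions $[\mathbf n]_\Lambda \subset [\mathbf n]_L$ and $[\mathbf n]_L \subset [\mathbf n]_\Lambda$ both hold, so the classes coincide, reducing \eqref{XiHatLLambda} to a single term $\widehat\Xi^\Lambda_{[\mathbf n]} = \widehat\Xi^L_{[\mathbf n]}$.

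There is no genuine obstacle here; the entire content of the proposition is a bookkeeping consequence of the definitions once one notices that $\Lambda \preccurlyeq L$ is exactly the statement that $\sim_\Lambda$ is a refinement of $\sim_L$. The only point that requires a moment of attention is the delay-independence of $\Xi_{\mathbf n}$, which is what allows the same matrices to be regrouped under two different equivalence relations.
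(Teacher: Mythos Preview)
Your proof is correct and follows essentially the same approach as the paper: both use $Z(\Lambda)\subset Z(L)$ to deduce $[\mathbf n]_\Lambda\subset[\mathbf n]_L$, observe that the $\sim_\Lambda$-classes partition each $\sim_L$-class, and then regroup the defining sum of $\widehat\Xi^L_{[\mathbf n]}$ accordingly. Your explicit remark that $\Xi_{\mathbf n}$ is independent of the delay vector is a nice clarification that the paper leaves implicit.
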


\begin{proof}
If $\Lambda \preccurlyeq L$ and $\mathbf n \in \mathbb N^N$, the inclusion $[\mathbf n]_\Lambda \subset [\mathbf n]_L$ follows immediately from the fact that $Z(\Lambda) \subset Z(L)$ and that $[\mathbf n]_\lambda = (\mathbf n + Z(\lambda)) \cap \mathbb N^N$ for every $\mathbf n \in \mathbb N^N$ and $\lambda \in (0, +\infty)^N$. Moreover, the set $\{\tau \in \mathcal N_\Lambda \suchthat \tau \subset [\mathbf n]_L\}$ is a partition of $[\mathbf n]_L$, since, for every $\mathbf n^\prime \in [\mathbf n]_L$, one has $[\mathbf n^\prime]_\Lambda \subset [\mathbf n^\prime]_L = [\mathbf n]_L$ and all equivalence classes in $\mathcal N_\Lambda$ are disjoint. Hence
\[
\sum_{\substack{\tau \in \mathcal N_\Lambda \\ \tau \subset [\mathbf n]_L}} \widehat\Xi^\Lambda_{\tau} = \sum_{\substack{\tau \in \mathcal N_\Lambda \\ \tau \subset [\mathbf n]_L}} \sum_{\mathbf n^\prime \in \tau} \Xi_{\mathbf n^\prime} = \sum_{\mathbf n^\prime \in [\mathbf n]_L} \Xi_{\mathbf n^\prime} = \widehat\Xi^L_{[\mathbf n]}.
\]
The statements in the case $\Lambda \approx L$ follow immediately.
\end{proof}

The first main result of this section is the following theorem.

\begin{theorem}
\label{TheoCompRat}
Let $A = (A_1, \dotsc, A_N) \in \mathcal M_d(\mathbb C)^N$, $B \in \mathcal M_{d, m}(\mathbb C)$, $\Lambda, L \in (0, +\infty)^N$, and $T > 0$ be such that $\Lambda \preccurlyeq L$. Set $\kappa = \max_{j \in \llbracket 1, N\rrbracket} \frac{\Lambda_j}{L_j}$. If $\Sigma(A, B, L)$ is relatively controllable in time $T$, then $\Sigma(A, B, \Lambda)$ is relatively controllable in time $\kappa T$.
\end{theorem}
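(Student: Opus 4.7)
The plan is to reduce the relative controllability of $\Sigma(A,B,\Lambda)$ in time $\kappa T$ to that of $\Sigma(A,B,L)$ in time $T$ by comparing the spanning sets that appear in Definition \ref{DefiRelativeControl}. The crucial tool is Proposition \ref{PropPreorder}, which, given $\Lambda \preccurlyeq L$, expresses each coefficient $\widehat\Xi^L_{[\mathbf n]}$ as a finite sum of coefficients $\widehat\Xi^\Lambda_{\tau}$ indexed by the $\sim_\Lambda$-equivalence classes $\tau$ contained in $[\mathbf n]_L$. So each generator $\widehat\Xi^L_{[\mathbf n]} B w$ of the $L$-spanning set is automatically a linear combination of generators $\widehat\Xi^\Lambda_{\tau} B w$ of the $\Lambda$-spanning set, and the only thing left to verify is that the indices $\tau$ that arise in the sum satisfy the ``time budget'' $\Lambda \cdot \mathbf n' \leq \kappa T$ required by Definition \ref{DefiRelativeControl} for $\Sigma(A,B,\Lambda)$.

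The key estimate is elementary: if $\tau \subset [\mathbf n]_L$ with $L \cdot \mathbf n \leq T$, pick any $\mathbf n' \in \tau$; since $\tau \subset [\mathbf n]_L$ yields $L \cdot \mathbf n' = L \cdot \mathbf n \leq T$, and since $\mathbf n' \in \mathbb N^N$ with $\Lambda_j \leq \kappa L_j$ for every $j$ by the very definition of $\kappa$, one gets
\[
\Lambda \cdot \mathbf n' = \sum_{j=1}^N \Lambda_j n'_j \leq \kappa \sum_{j=1}^N L_j n'_j = \kappa L \cdot \mathbf n' \leq \kappa T.
\]
Hence every such $\tau$ belongs to $\{\sigma \in \mathcal N_\Lambda \suchthat \Lambda \cdot \mathbf m \leq \kappa T \text{ for some (equiv., any) } \mathbf m \in \sigma\}$.

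Combining the two ingredients, I would conclude as follows. Relative controllability of $\Sigma(A,B,L)$ in time $T$ says that the $L$-spanning set in \eqref{CtlLeq} equals $\mathbb C^d$. By the expression \eqref{XiHatLLambda} and the estimate above, this set is contained in
\[
\Span\left\{\widehat\Xi^\Lambda_{\tau} B w \midsuchthat \tau \in \mathcal N_\Lambda,\; \Lambda \cdot \mathbf n \leq \kappa T \text{ for } \mathbf n \in \tau,\; w \in \mathbb C^m\right\},
\]
which is precisely the $\Lambda$-spanning set associated with time $\kappa T$. Since the former equals $\mathbb C^d$, so does the latter, and $\Sigma(A,B,\Lambda)$ is relatively controllable in time $\kappa T$ by Definition \ref{DefiRelativeControl}.

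There is no real obstacle: the entire argument is a bookkeeping exercise using Proposition \ref{PropPreorder} together with the pointwise bound $\Lambda_j \leq \kappa L_j$. The only mildly subtle point is to confirm that the value $\Lambda \cdot \mathbf n'$ is independent of the representative $\mathbf n' \in \tau$ (true by definition of $\sim_\Lambda$), so that the membership condition for the $\Lambda$-spanning set is well-posed on equivalence classes.
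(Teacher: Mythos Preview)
Your proposal is correct and follows essentially the same approach as the paper's proof: both use Proposition~\ref{PropPreorder} to write $\widehat\Xi^L_{[\mathbf n]}$ as a sum of $\widehat\Xi^\Lambda_{\tau}$ over $\tau \subset [\mathbf n]_L$, combined with the componentwise bound $\Lambda_j \leq \kappa L_j$ to pass from $L \cdot \mathbf n' \leq T$ to $\Lambda \cdot \mathbf n' \leq \kappa T$, yielding the required inclusion of spanning sets. The only cosmetic difference is that the paper writes the inequality $\Lambda \cdot \mathbf n \leq \kappa L \cdot \mathbf n$ as a convex combination $\frac{\Lambda \cdot \mathbf n}{L \cdot \mathbf n} = \sum_j \frac{\Lambda_j}{L_j} \frac{L_j n_j}{L \cdot \mathbf n} \leq \kappa$, whereas you derive it directly from $\Lambda_j \leq \kappa L_j$; these are of course equivalent.
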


\begin{proof}
Notice that, for every $\mathbf n = (n_1, \dotsc, n_N) \in \mathbb N^N\setminus\{0\}$, one has $\frac{\Lambda \cdot \mathbf n}{L \cdot \mathbf n} = \sum_{j=1}^N \frac{\Lambda_j}{L_j} \frac{L_j n_j}{L \cdot \mathbf n} \leq \kappa$, and thus $\Lambda \cdot \mathbf n \leq \kappa L \cdot \mathbf n$ for every $\mathbf n \in \mathbb N^N$. Using Proposition \ref{PropPreorder}, one obtains that
\begin{align*}
  & \Span \left\{\widehat\Xi^L_{[\mathbf n]} B w \midsuchthat [\mathbf n] \in \mathcal N_L,\; L \cdot \mathbf n \leq T,\; w \in \mathbb C^m\right\} \displaybreak[0] \\
{} = {} & \Span \left\{\sum_{\substack{\tau \in \mathcal N_\Lambda \\ \tau \subset [\mathbf n]_L}} \widehat\Xi^\Lambda_{\tau} B w \midsuchthat [\mathbf n] \in \mathcal N_L,\; L \cdot \mathbf n \leq T,\; w \in \mathbb C^m\right\} \displaybreak[0] \\
{} \subset {} & \Span \left\{\widehat\Xi^\Lambda_{\tau} B w \midsuchthat \tau \in \mathcal N_\Lambda,\; \tau \subset [\mathbf n]_L,\; [\mathbf n]_L \in \mathcal N_L,\; L \cdot \mathbf n \leq T,\; w \in \mathbb C^m\right\} \displaybreak[0] \\
{} = {} & \Span \left\{\widehat\Xi^\Lambda_{[\mathbf n]} B w \midsuchthat [\mathbf n] \in \mathcal N_\Lambda,\; L \cdot \mathbf n \leq T,\; w \in \mathbb C^m\right\} \displaybreak[0] \\
{} \subset {} & \Span \left\{\widehat\Xi^\Lambda_{[\mathbf n]} B w \midsuchthat [\mathbf n] \in \mathcal N_\Lambda,\; \Lambda \cdot \mathbf n \leq \kappa T,\; w \in \mathbb C^m\right\},
\end{align*}
which proves the statement.
\end{proof}

Theorem \ref{TheoCompRat} proves that relative controllability of $\Sigma(A, B, L)$ implies that of $\Sigma(A, B, \Lambda)$ for all delay vectors $\Lambda$ such that $\Lambda \preccurlyeq L$ (with different controllability times). The converse of this result does not hold, as illustrated in the following example.

\begin{example}
Consider the system $\Sigma(A, B, \Lambda)$ with $N = 2$, $d = 3$, $m = 1$, $\Lambda = (1, \lambda)$ for some $\lambda \in (0, 1)$, and
\[
A_1 = 
\begin{pmatrix}
0 & 0 & -1 \\
0 & 0 & 0 \\
0 & 0 & 0 \\
\end{pmatrix}, \qquad A_2 = 
\begin{pmatrix}
0 & 1 & 0 \\
0 & 0 & 1 \\
0 & 0 & 0 \\
\end{pmatrix}, \qquad B = 
\begin{pmatrix}
0 \\
0 \\
1 \\
\end{pmatrix}.
\]
One has $A_1 = - A_2^2$ and hence one immediately computes
\[
\Xi_{\mathbf n} = \begin{dcases*}
\id_3, & if $\mathbf n = (0, 0)$, \\
A_1, & if $\mathbf n = (1, 0)$, \\
A_2, & if $\mathbf n = (0, 1)$, \\
A_2^2, & if $\mathbf n = (0, 2)$, \\
0, & otherwise.
\end{dcases*}
\]
If $\lambda \notin \mathbb Q$, one has $\widehat\Xi^\Lambda_{[\mathbf n]} = \Xi_{\mathbf n}$ for every $\mathbf n \in \mathbb N^2$, and thus, for every $T \geq 1$,
\begin{align*}
 & \Span \left\{\widehat\Xi^\Lambda_{[\mathbf n]} B w \midsuchthat [\mathbf n] \in \mathcal N_\Lambda,\; \Lambda \cdot \mathbf n \leq T,\; w \in \mathbb C\right\} \\
{} = {} & \Span \left\{\Xi_{\mathbf n} B \midsuchthat \mathbf n = (n_1, n_2) \in \mathbb N^2,\; n_1 + \lambda n_2 \leq T\right\} \displaybreak[0] \\
{} \supset {} & \Span \{\Xi_{(0, 0)} B, \Xi_{(1, 0)}B, \Xi_{(0, 1)}B\} = \mathbb C^3,
\end{align*}
which shows that $\Sigma(A, B, \Lambda)$ is relatively controllable for every $T \geq 1$ when $\lambda \notin \mathbb Q$. However, for $\lambda = \frac{1}{2}$, one computes
\[
\widehat\Xi^\Lambda_{[\mathbf n]} = \begin{dcases*}
\id_3, & if $[\mathbf n] = [(0, 0)]$, \\
A_2, & if $[\mathbf n] = [(0, 1)]$, \\
0, & otherwise.
\end{dcases*}
\]
Thus, for every $T > 0$,
\[
\Span \left\{\widehat\Xi^\Lambda_{[\mathbf n]} B w \midsuchthat [\mathbf n] \in \mathcal N_\Lambda,\; \Lambda \cdot \mathbf n \leq T,\; w \in \mathbb C\right\} \subset \Span\{B, A_2 B\} \varsubsetneq \mathbb C^3,
\]
and hence $\Sigma(A, B, \Lambda)$ is not relatively controllable for any $T > 0$ when $\lambda = \frac{1}{2}$.
\end{example}

Even if the converse of Theorem \ref{TheoCompRat} does not hold in general, one can still obtain that relative controllability with a delay vector $\Lambda \in (0, +\infty)^N$ implies relative controllability for another delay vector $L \succcurlyeq \Lambda$ with commensurable components and sufficiently close to $\Lambda$.

\begin{theorem}
\label{TheoRatPerturb}
Let $A = (A_1, \dotsc, A_N) \in \mathcal M_d(\mathbb C)$, $B \in \mathcal M_{d, m}(\mathbb C)$, $\Lambda = (\Lambda_1, \dotsc, \Lambda_N) \in (0,\allowbreak +\infty)^N$, and $T > 0$. For every $\varepsilon > 0$, there exists $L = (L_1, \dotsc, L_N) \in (0, +\infty)^N$ with commensurable components satisfying $L \succcurlyeq \Lambda$ and $1 \leq \frac{\Lambda_j}{L_j} < 1 + \varepsilon$ for every $j \in \llbracket 1, N\rrbracket$ such that, if $\Sigma(A, B, \Lambda)$ is relatively controllable in time $T$, then $\Sigma(A, B, L)$ is also relatively controllable in time $T$.
\end{theorem}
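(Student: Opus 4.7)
My plan is to build $L$ as a rational perturbation of $\Lambda$ lying in the subspace of ``$\succcurlyeq \Lambda$'' vectors and sufficiently generic that the $\Lambda$-equivalence classes relevant to controllability in time $T$ are preserved. Using Proposition~\ref{Prop3-9}, write $\Lambda = M \ell$ with $M \in \mathcal M_{N, h}(\mathbb N)$ of rank $h$ and $\ell \in (0, +\infty)^h$ rationally independent. Any $L = M \ell'$ with $\ell' \in \mathbb Q^h$ automatically lies in $\range M \cap \mathbb Q^N$, so $L \succcurlyeq \Lambda$ and $L$ has commensurable components. Since every row of $M$ has at least one positive entry, choosing each $\ell'_i \in \bigl(\tfrac{\ell_i}{1+\varepsilon}, \ell_i\bigr]$ yields $L_j \in \bigl(\tfrac{\Lambda_j}{1+\varepsilon}, \Lambda_j\bigr]$ for every $j$, i.e., $1 \leq \Lambda_j/L_j < 1 + \varepsilon$.

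The additional constraint I would impose on $\ell'$ is a genericity condition. Define the \emph{finite} set $\widetilde F = \{\mathbf n \in \mathbb N^N \suchthat \Lambda \cdot \mathbf n \leq (1+\varepsilon) T\}$, which contains every $\mathbf n$ relevant to the $L$-controllability criterion in time $T$, since the componentwise bound $\Lambda_j \leq (1+\varepsilon) L_j$ gives $\Lambda \cdot \mathbf n \leq (1+\varepsilon) L \cdot \mathbf n$. I then require that $L \cdot \mathbf m \neq 0$ for every $\mathbf m \in (\widetilde F - \widetilde F) \setminus Z(\Lambda)$. Because $\ell$ is rationally independent, $Z(\Lambda) = \ker M^\transp \cap \mathbb Z^N$, so $\mathbf m \notin Z(\Lambda)$ forces $M^\transp \mathbf m \neq 0$; the condition $L \cdot \mathbf m = \ell' \cdot (M^\transp \mathbf m) = 0$ then cuts out a proper hyperplane in $\mathbb R^h$. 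Since $\widetilde F - \widetilde F$ is finite and $\mathbb Q^h$ is dense in $\mathbb R^h$, one can choose $\ell' \in \mathbb Q^h$ inside the window $\prod_{i=1}^h \bigl(\tfrac{\ell_i}{1+\varepsilon}, \ell_i\bigr]$ avoiding each of these finitely many hyperplanes.

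With this $L$, I claim $[\mathbf n]_L = [\mathbf n]_\Lambda$ whenever $\Lambda \cdot \mathbf n \leq T$. Fix such an $\mathbf n$ and take $\mathbf n' \in [\mathbf n]_L$. Then $L \cdot \mathbf n' = L \cdot \mathbf n \leq \Lambda \cdot \mathbf n \leq T$, hence $\Lambda \cdot \mathbf n' \leq (1+\varepsilon) T$, so both $\mathbf n$ and $\mathbf n'$ belong to $\widetilde F$. Therefore $\mathbf n' - \mathbf n \in \widetilde F - \widetilde F$ satisfies $L \cdot (\mathbf n' - \mathbf n) = 0$, and the genericity condition forces $\mathbf n' - \mathbf n \in Z(\Lambda)$, i.e., $\mathbf n' \in [\mathbf n]_\Lambda$. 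Combining with Proposition~\ref{PropPreorder} yields $\widehat\Xi^L_{[\mathbf n]_L} = \widehat\Xi^\Lambda_{[\mathbf n]_\Lambda}$ for every such $\mathbf n$. Since moreover $L \cdot \mathbf n \leq \Lambda \cdot \mathbf n \leq T$, each generator $\widehat\Xi^\Lambda_{[\mathbf n]} B w$ of the span appearing in the controllability criterion for $\Sigma(A, B, \Lambda)$ also appears in the span for $\Sigma(A, B, L)$ in time $T$. Assuming $\Sigma(A, B, \Lambda)$ is relatively controllable in time $T$, the $L$-span therefore equals $\mathbb C^d$, which is the desired conclusion.

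The main obstacle is the construction of $\ell'$: one must produce a rational vector simultaneously inside the prescribed componentwise window and off finitely many explicit hyperplanes of $\mathbb R^h$. The identity $Z(\Lambda) = \ker M^\transp \cap \mathbb Z^N$ is what makes this clean, turning the avoidance of ``bad'' perturbations into a standard transversality-plus-density argument in $\mathbb R^h$; once this is handled, the span inclusion is purely combinatorial via Proposition~\ref{PropPreorder}.
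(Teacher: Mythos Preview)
Your proof is correct and follows essentially the same strategy as the paper: write $\Lambda = M\ell$ via Proposition~\ref{Prop3-9}, produce a commensurable $L = M\ell'$ in $\range M$ close to $\Lambda$ from below so that the $\Lambda$- and $L$-equivalence classes coincide on the finite set $\{\mathbf n : \Lambda \cdot \mathbf n \leq (1+\varepsilon)T\}$, then use Proposition~\ref{PropPreorder} and the componentwise inequality $L \cdot \mathbf n \leq \Lambda \cdot \mathbf n$ to transfer the span condition. The only real difference is in how the perturbation $\ell'$ is obtained: the paper takes the explicit sequence $L^{(n)} = \tfrac{1}{n} M \lfloor n\ell\rfloor$ and uses a minimum-gap argument (the $\delta$ in Lemma~\ref{LemmLApproxLambda}) to show that for large $n$ no new coincidences $L^{(n)}\cdot\mathbf n = L^{(n)}\cdot\mathbf n'$ are created on the finite set, whereas you pick $\ell' \in \mathbb Q^h$ generically, avoiding the finitely many hyperplanes $\{\ell' : (M^\transp \mathbf m)\cdot \ell' = 0\}$ for $\mathbf m \in (\widetilde F - \widetilde F)\setminus Z(\Lambda)$. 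Your transversality-plus-density argument is perhaps slightly cleaner conceptually (it isolates exactly the ``bad'' conditions), while the paper's construction is more explicit; both are standard and lead to the same conclusion with the same amount of work.
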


Before proving Theorem \ref{TheoRatPerturb}, let us show the following result.

\begin{lemma}
\label{LemmLApproxLambda}
Let $\Lambda = (\Lambda_1, \dotsc, \Lambda_N) \in (0, +\infty)^N$ and $T > 0$. For every $\varepsilon > 0$, there exists $L = (L_1, \dotsc,\allowbreak L_N) \in (0, +\infty)^N$ with commensurable components such that $L \succcurlyeq \Lambda$, $1 \leq \frac{\Lambda_j}{L_j} < 1 + \varepsilon$ for every $j \in \llbracket 1, N\rrbracket$, and, for every $\mathbf n, \mathbf n^\prime \in \mathbb N^N$ with $\Lambda \cdot \mathbf n \leq T$, one has $\Lambda \cdot \mathbf n = \Lambda \cdot \mathbf n^\prime$ if and only if $L \cdot \mathbf n = L \cdot \mathbf n^\prime$.
\end{lemma}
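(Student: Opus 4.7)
The plan is to use Proposition \ref{Prop3-9} to factor $\Lambda = M\ell$ with $\ell = (\ell_1,\dotsc,\ell_h) \in (0,+\infty)^h$ rationally independent and $M \in \mathcal M_{N,h}(\mathbb N)$ of full column rank, and then to build $L$ by rounding $\ell$ componentwise onto a common grid of spacing $\lambda > 0$. Concretely, I would set $\ell^{(\lambda)}_i := \lambda \floor{\ell_i/\lambda}$ (so $\ell_i - \lambda < \ell^{(\lambda)}_i \leq \ell_i$, and $\ell^{(\lambda)}_i > 0$ for $\lambda < \min_i \ell_i$) and define $L := M\ell^{(\lambda)}$. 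Then $L$ has commensurable components, all lying in $\lambda \mathbb N$; and since each row of $M$ is nonzero (otherwise $\Lambda_j = 0$), each $L_j$ is also positive. Since $M$ has nonnegative entries, $L_j \leq \Lambda_j$; and $\ell^{(\lambda)}_i > \ell_i - \lambda$ implies that, as soon as $\lambda < \varepsilon\,\min_i\ell_i/(1+\varepsilon)$, one has $\ell^{(\lambda)}_i > \ell_i/(1+\varepsilon)$ and hence $L_j > \Lambda_j/(1+\varepsilon)$. Finally, the rational independence of $\ell$ gives $Z(\Lambda) = \{\mathbf n \in \mathbb Z^N \suchthat M^\transp\mathbf n = 0\}$, which is contained in $Z(L)$ for any choice of $\ell^{(\lambda)}$, so $L \succcurlyeq \Lambda$.

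The more delicate part is the ``if and only if'' condition on $V_\Lambda(T) := \{\mathbf n \in \mathbb N^N \suchthat \Lambda\cdot\mathbf n \leq T\}$. The forward implication is immediate from $L\succcurlyeq\Lambda$. For the converse, take $\mathbf n \in V_\Lambda(T)$ and $\mathbf n' \in \mathbb N^N$ with $L\cdot\mathbf n' = L\cdot\mathbf n$. Since $L\cdot\mathbf n \leq \Lambda\cdot\mathbf n \leq T$ and $L_j > \Lambda_j/(1+\varepsilon)$, one obtains $n'_j \leq T(1+\varepsilon)/\Lambda_j$; consequently both $\mathbf n$ and $\mathbf n'$ lie in the fixed finite box $F := \{\mathbf k \in \mathbb N^N \suchthat k_j \leq T(1+\varepsilon)/\Lambda_j,\; \forall j \in \llbracket 1,N\rrbracket\}$. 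It therefore suffices to ensure that, for every $\mathbf m$ in the finite set $D_\ast := \{\mathbf k - \mathbf k' \suchthat \mathbf k,\mathbf k'\in F\} \setminus Z(\Lambda)$, one has $L\cdot\mathbf m \neq 0$. Now $L\cdot\mathbf m = \ell^{(\lambda)}\cdot(M^\transp\mathbf m)$ converges to $\ell\cdot(M^\transp\mathbf m) = \Lambda\cdot\mathbf m \neq 0$ as $\lambda \to 0^+$, and $D_\ast$ is finite, so for all sufficiently small $\lambda > 0$ the non-vanishing holds simultaneously for every $\mathbf m \in D_\ast$.

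The main obstacle is reconciling the several smallness requirements on $\lambda$ (positivity of $\ell^{(\lambda)}_i$, the ratio bound, and the non-vanishing of the finitely many quantities $L\cdot\mathbf m$ for $\mathbf m\in D_\ast$); but each of these is an open condition on $\lambda$ satisfied near $\lambda=0$, so taking $\lambda$ smaller than the minimum of finitely many explicit positive thresholds suffices, with no subtle interaction between them. The conceptual content is that any new linear relations among the $L_j$ that are absent among the $\Lambda_j$ correspond to vectors $M^\transp\mathbf m$ that are annihilated by $\ell^{(\lambda)}$ but not by $\ell$, and such accidental coincidences cannot occur within the prescribed finite window $F$ once $\ell^{(\lambda)}$ is close enough to $\ell$.
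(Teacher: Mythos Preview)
Your proposal is correct and follows essentially the same route as the paper: the construction $L = M\ell^{(\lambda)}$ with $\ell^{(\lambda)}_i = \lambda\floor{\ell_i/\lambda}$ is exactly the paper's $L^{(n)} = \frac{1}{n}M\floor{n\ell}$ with $\lambda = 1/n$, and the verifications of commensurability, $L\succcurlyeq\Lambda$, and the ratio bound are identical. For the last equivalence, the paper also reduces to a finite set (namely $\mathcal F = \{\mathbf n : \Lambda\cdot\mathbf n \leq (1+\varepsilon)T\}$) and argues by convergence, using a $\delta/3$ separation argument rather than your direct non-vanishing of the finitely many $L\cdot\mathbf m$; the two packagings are equivalent.
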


\begin{proof}
Write $\Lambda = M \ell$, with $M = \left(m_{jk}\right)_{j \in \llbracket 1, N\rrbracket, k \in \llbracket 1, h\rrbracket} \in \mathcal M_{N, h}(\mathbb N)$ for some $h \in \llbracket 1, N\rrbracket$ and $\ell = (\ell_1, \dotsc,\allowbreak \ell_h) \in (0, +\infty)^h$ with rationally independent components, chosen according to Proposition \ref{Prop3-9}. For $n \in \mathbb N^\ast$, we define $L^{(n)} = \left(L^{(n)}_1, \dotsc, L^{(n)}_N\right) \in [0, +\infty)^N$ by $L^{(n)} = \frac{1}{n} M \floor{n \ell}$, where $\floor{n \ell} = (\floor{n \ell_1},\allowbreak \dotsc,\allowbreak \floor{n \ell_h})$. We claim that $L^{(n)}$ satisfies the required properties for $n \in \mathbb N^\ast$ large enough.

Notice first that, if $n \geq 1/\ell_{\min}$, then all the components of $\floor{n \ell}$ are positive, and hence $L^{(n)} \in (0, +\infty)^N$. Moreover, $L^{(n)} \in \mathbb Q^N$, and thus $L^{(n)}$ has commensurable components. If $\mathbf n \in Z(\Lambda)$, one has $\Lambda \cdot \mathbf n = 0$, which yields $\mathbf n^\transp M \ell = 0$ and, since $\ell$ has rationally independent components and the row vector $\mathbf n^\transp M$ has integer components, one obtains that $\mathbf n^\transp M = 0$, which implies that $L^{(n)} \cdot \mathbf n = \frac{1}{n} \mathbf n^\transp M \floor{n \ell} = 0$, and hence $\mathbf n \in Z(L^{(n)})$, proving that $L^{(n)} \succcurlyeq \Lambda$.

For $j \in \llbracket 1, N\rrbracket$, since $n \ell_j - 1 < \floor{n \ell_j} \leq n \ell_j$, one obtains from the definition of $L^{(n)}$ that $L^{(n)}_j = \frac{1}{n} \sum_{k=1}^h m_{jk} \floor{n \ell_k} \leq \Lambda_j$ and that $L^{(n)}_j \geq \Lambda_j - \frac{1}{n} \sum_{k=1}^h m_{jk} \geq \Lambda_j - \abs{M}_{\infty}/n$. Hence, for $n \geq 1/\ell_{\min}$, one has $1 \leq \frac{\Lambda_j}{L_j^{(n)}} \leq 1 + \frac{\abs{M}_{\infty}}{n L_j^{(n)}}$. Notice that, by construction, for every $j \in \llbracket 1, N\rrbracket$, one has $L^{(n)}_j \to \Lambda_j$ as $n \to +\infty$. Hence there exists $N_1 \geq 1/\ell_{\min}$ such that, for $n \geq N_1$, $L_j^{(n)} \geq \Lambda_j/2$ for every $j \in \llbracket 1, N\rrbracket$. Thus, for $n \geq N_1$, one has $1 \leq \frac{\Lambda_j}{L_j^{(n)}} \leq 1 + \frac{2 \abs{M}_{\infty}}{n \Lambda_j} \leq 1 + \frac{2 \abs{M}_{\infty}}{n \Lambda_{\min}}$. Letting $N_2 \geq N_1$ be such that $N_2 > \frac{2 \abs{M}_\infty}{\varepsilon \Lambda_{\min}}$, one obtains that $1 \leq \frac{\Lambda_j}{L_j^{(n)}} < 1 + \varepsilon$ for every $j \in \llbracket 1, N\rrbracket$ and $n \geq N_2$.

To prove the last part of the lemma, notice that, for every $n \geq 1/\ell_{\min}$, since $\Lambda \preccurlyeq L^{(n)}$, if $\mathbf n, \mathbf n^\prime \in \mathbb N^N$ are such that $\Lambda \cdot \mathbf n = \Lambda \cdot \mathbf n^\prime$, then $\mathbf n - \mathbf n^\prime \in Z(\Lambda)$ and thus $L^{(n)} \cdot \mathbf n = L^{(n)} \cdot \mathbf n^\prime$. Let $\mathcal F$ denote the finite set $\mathcal F = \{\mathbf n \in \mathbb N^N \suchthat \Lambda \cdot \mathbf n \leq (1+\varepsilon) T\}$ and define
\[
\delta = \min\left\{\abs{\Lambda \cdot \mathbf n - \Lambda \cdot \mathbf n^\prime} \midsuchthat \mathbf n, \mathbf n^\prime \in \mathcal F,\; \Lambda \cdot \mathbf n \not = \Lambda \cdot \mathbf n^\prime\right\} > 0.
\]
Since $L^{(n)} \to \Lambda$ as $n \to +\infty$ and $\mathcal F$ is finite, there exists $N_3 \geq N_2$ such that, for $n \geq N_3$, one has $\abs{L^{(n)} \cdot \mathbf n - \Lambda \cdot \mathbf n} < \frac{\delta}{3}$ for every $\mathbf n \in \mathcal F$. Let $n \geq N_3$. Assume, to obtain a contradiction, that $\mathbf n, \mathbf n^\prime \in \mathbb N^N$ are such that $\Lambda \cdot \mathbf n \leq T$, $\Lambda \cdot \mathbf n \not = \Lambda \cdot \mathbf n^\prime$, and $L^{(n)} \cdot \mathbf n = L^{(n)} \cdot \mathbf n^\prime$. Then, using that $1 \leq \frac{\Lambda_j}{L_j^{(n)}} < 1 + \varepsilon$ for every $j \in \llbracket 1, N\rrbracket$, one computes $\Lambda \cdot \mathbf n^\prime < (1 + \varepsilon) L^{(n)} \cdot \mathbf n^\prime = (1 + \varepsilon) L^{(n)} \cdot \mathbf n \leq (1 + \varepsilon) \Lambda \cdot \mathbf n \leq (1 + \varepsilon) T$, which shows that $\mathbf n^\prime \in \mathcal F$. But
\[
\delta \leq \abs{\Lambda \cdot \mathbf n - \Lambda \cdot \mathbf n^\prime} \leq \abs{\Lambda \cdot \mathbf n - L^{(n)} \cdot \mathbf n} + \abs{L^{(n)} \cdot \mathbf n - L^{(n)} \cdot \mathbf n^\prime} + \abs{L^{(n)} \cdot \mathbf n^\prime - \Lambda \cdot \mathbf n^\prime} < \frac{2\delta}{3},
\]
which is a contradiction since $\delta > 0$. Hence, if $\mathbf n, \mathbf n^\prime \in \mathbb N^N$ are such that $\Lambda \cdot \mathbf n \leq T$ and $\Lambda \cdot \mathbf n \not = \Lambda \cdot \mathbf n^\prime$ one has $L^{(n)} \cdot \mathbf n \not = L^{(n)} \cdot \mathbf n^\prime$.
\end{proof}

\begin{proof}[Proof of Theorem \ref{TheoRatPerturb}]
Let $\varepsilon > 0$ and take $L$ as in Lemma \ref{LemmLApproxLambda}. If $\mathbf n \in \mathbb N^N$ is such that $\Lambda \cdot \mathbf n \leq T$, then $[\mathbf n]_\Lambda = [\mathbf n]_L$, since it follows from Proposition \ref{PropPreorder} that $[\mathbf n]_{\Lambda} \subset [\mathbf n]_{L}$ and, if $\mathbf n^\prime \in [\mathbf n]_L$, Lemma \ref{LemmLApproxLambda} shows that $\mathbf n^\prime \in [\mathbf n]_\Lambda$ since $\Lambda \cdot \mathbf n \leq T$. In particular, the only equivalence class from $\mathcal N_\Lambda$ contained in $[\mathbf n]_L$ is $[\mathbf n]_\Lambda$. Hence, Proposition \ref{PropPreorder} shows that, for $\mathbf n \in \mathbb N^N$ with $\Lambda \cdot \mathbf n \leq T$, one has
\[\widehat\Xi^L_{[\mathbf n]} = \sum_{\substack{\tau \in \mathcal N_\Lambda \\ \tau \subset [\mathbf n]_L}} \widehat\Xi^\Lambda_{\tau} = \widehat\Xi^\Lambda_{[\mathbf n]},\]
and thus
\begin{align*}
& \Span \left\{\widehat\Xi^\Lambda_{[\mathbf n]} B w \midsuchthat [\mathbf n] \in \mathcal N_\Lambda,\; \Lambda \cdot \mathbf n \leq T,\; w \in \mathbb C^m\right\} \\
{} = {} &\Span \left\{\widehat\Xi^L_{[\mathbf n]} B w \midsuchthat \mathbf n \in \mathbb N^N,\; \Lambda \cdot \mathbf n \leq T,\; w \in \mathbb C^m\right\} \displaybreak[0] \\
{} \subset {} & \Span \left\{\widehat\Xi^L_{[\mathbf n]} B w \midsuchthat \mathbf n \in \mathbb N^N,\; L \cdot \mathbf n \leq T,\; w \in \mathbb C^m\right\},
\end{align*}
since $L \cdot \mathbf n \leq \Lambda \cdot \mathbf n$ for every $\mathbf n \in \mathbb N^N$. Hence relative controllability of $\Sigma(A, B, \Lambda)$ in time $T$ implies relative controllability of $\Sigma(A, B, L)$ in time $T$.
\end{proof}

\section{Minimal time for relative controllability}
\label{SecMinBound}

As stated in Remark \ref{RemkKalman}, when $N = 1$ and \eqref{MainSyst} is written as $x(t) = A x(t - \Lambda) + B u(t)$, relative controllability in time $T$ is equivalent to Kalman condition $\rank \begin{pmatrix}B & AB & A^2 B & \cdots & A^{\floor{T/\Lambda}} B\end{pmatrix} = d$. Thanks to Cayley--Hamilton Theorem, $\rank \begin{pmatrix}B & AB & A^2 B & \cdots & A^{\floor{T/\Lambda}} B\end{pmatrix} = \rank \begin{pmatrix}B & AB & A^2 B & \cdots & A^{d-1} B\end{pmatrix}$ for every $T \geq (d-1)\Lambda$. Hence, if the system is relatively controllable for some time $T > 0$, it is also relatively controllable in time $T = (d-1)\Lambda$, which proves that its minimal controllability time $T_{\min}$ satisfies $T_{\min} \leq (d-1)\Lambda$. The uniformity of this upper bound on the matrices $A$ and $B$ is important for practical applications, since, if one is interested in finding out whether a given system is relatively controllable for some time $T > 0$, it suffices to verify whether it is relatively controllable in time $T = (d-1)\Lambda$, which can be done algorithmically in a finite number of steps upper bounded by a constant independent of $A$ and $B$. The goal of this section is to generalize this upper bound on the minimal controllability time $T_{\min}$ for systems with larger $N$.

We start by considering the case of systems with commensurable delays. In this case, by considering an augmented system in higher dimension, one can characterize the relative controllability of $\Sigma(A, B, \Lambda)$ in terms of a certain output controllability of the augmented system, as shown in the next lemma.

\begin{lemma}
\label{LemmAugm}
Let $A = (A_1, \dotsc, A_N) \in \mathcal M_d(\mathbb C)^N$, $B \in \mathcal M_{d, m}(\mathbb C)$, $\Lambda = (\Lambda_1, \dotsc, \Lambda_N) \in (0, \allowbreak +\infty)^N$, and $T > 0$. Assume that $\Lambda$ has commensurable components and let $\lambda > 0$ and $k_1, \dotsc, k_N \in \mathbb N^\ast$ be such that $(\Lambda_1, \dotsc, \Lambda_N) = \lambda(k_1, \dotsc, k_N)$. Denote $K = \max_{j \in \llbracket 1, N\rrbracket} k_j$. Then $\Sigma(A, B, \Lambda)$ is relatively controllable in time $T > 0$ if and only if, for every $X_0: [-\lambda, 0) \to \mathbb C^{K d}$ and $x_1 \in \mathbb C^{d}$, there exists $u: [0, T] \to \mathbb C^m$ such that the unique solution $X: [-\lambda, T] \to \mathbb C^{K d}$ of
\begin{equation}
\label{SystAugm}
\left\{
\begin{aligned}
X(t) & {} = \widehat A X(t - \lambda) + \widehat B u(t), & \qquad & t \in [0, T], \\
X(t) & {} = X_0(t), & & t \in [-\lambda, 0),
\end{aligned}
\right.
\end{equation}
satisfies $\widehat C X(T) = x_1$, where the matrices $\widehat A \in \mathcal M_{Kd}(\mathbb C)$, $\widehat B \in \mathcal M_{Kd, m}(\mathbb C)$, and $\widehat C \in \mathcal M_{d, Kd}(\mathbb C)$ are given by
\begin{equation}
\label{AugmMat}
\begin{aligned}
\widehat A & {} = \begin{pmatrix}
\widehat A_1 & \widehat A_2 & \widehat A_3 & \cdots & \widehat A_K \\
\id_d & 0 & 0 & \cdots & 0 \\
0 & \id_d & 0 & \cdots & 0 \\
\vdots & \vdots & \ddots & \ddots & \vdots \\
0 & 0 & \cdots & \id_d & 0 \\
\end{pmatrix} \in \mathcal M_{Kd}(\mathbb C), & \quad & & 
\widehat B & {} = \begin{pmatrix}
B \\
0 \\
0 \\
\vdots \\
0 \\
\end{pmatrix} \in \mathcal M_{K d, m}(\mathbb C), \\
\widehat C & {} = \begin{pmatrix}\id_d & 0 & 0 & \cdots & 0\end{pmatrix} \in \mathcal M_{d, Kd}(\mathbb C), & & & \widehat A_k & {} = \sum_{\substack{j = 1 \\ k_j = k}}^N A_j \quad \text{ for } k \in \llbracket 1, K\rrbracket,
\end{aligned}
\end{equation}
\end{lemma}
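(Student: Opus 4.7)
The plan is to exploit commensurability, $\Lambda_j = \lambda k_j$, to lift $\Sigma(A,B,\Lambda)$ to a single-delay system of enlarged dimension $Kd$, in which the extra components encode the past values $x(t-\lambda), x(t-2\lambda), \dotsc, x(t-(K-1)\lambda)$. Since, by Theorem \ref{TheoControlT1}, relative controllability of $\Sigma(A,B,\Lambda)$ in time $T$ is equivalent to the property that for every $x_0 \colon [-\Lambda_{\max}, 0) \to \mathbb{C}^d$ and every $x_1 \in \mathbb{C}^d$ one can choose $u$ with $x(T) = x_1$, the task reduces to matching this with the output-controllability statement for \eqref{SystAugm}. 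Note that here $\Lambda_{\max} = K\lambda$ by definition of $K$.

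First I would set up a bijection $\Phi$ between functions $x_0 \colon [-K\lambda, 0) \to \mathbb{C}^d$ and functions $X_0 \colon [-\lambda, 0) \to \mathbb{C}^{Kd}$ via the natural block-splitting: define $[\Phi(x_0)]_k(s) = x_0(s - (k-1)\lambda)$ for $s \in [-\lambda, 0)$ and $k \in \llbracket 1, K\rrbracket$, and recover $x_0$ from $X_0$ piecewise on the intervals $[-k\lambda, -(k-1)\lambda)$. Since the blocks of $X_0$ are allowed to be arbitrary, this really is a bijection with no consistency constraint.

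Next, fix a control $u \colon [0,T] \to \mathbb{C}^m$, let $x$ be the solution of $\Sigma(A, B, \Lambda)$ with initial condition $x_0$, and let $X$ be the solution of \eqref{SystAugm} with initial condition $\Phi(x_0)$ and the same $u$. I would verify by induction on $k$ that $X_k(t) = x(t - (k-1)\lambda)$ for all $t \in [0, T]$ and $k \in \llbracket 1, K\rrbracket$. For $k \geq 2$ this follows directly from the shift rows of $\widehat A$, which give $X_k(t) = X_{k-1}(t - \lambda)$, combined with the initial condition via $\Phi$. For $k = 1$ the first block equation reads
\[
X_1(t) = \sum_{k=1}^K \widehat A_k X_k(t-\lambda) + B u(t),
\]
and substituting $X_k(t-\lambda) = x(t - k\lambda)$ together with $\widehat A_k = \sum_{j \,:\, k_j = k} A_j$ collapses the sum to $\sum_{j=1}^N A_j x(t - \Lambda_j) + B u(t) = x(t)$; uniqueness of solutions from Proposition \ref{PropExistUnique} then yields $X_1 \equiv x$ on $[0,T]$.

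Finally, this correspondence gives $\widehat C X(T) = X_1(T) = x(T)$, so the two target conditions $x(T) = x_1$ and $\widehat C X(T) = x_1$ coincide, and quantifying over all $x_0$ is, via $\Phi$, the same as quantifying over all $X_0$; the stated equivalence follows. I do not expect any serious obstacle here: the verification is essentially bookkeeping, and the main point to be careful about is making sure the correspondence handles arbitrary (not necessarily consistent) augmented initial data $X_0$, which is exactly what the piecewise construction of $\Phi^{-1}$ guarantees.
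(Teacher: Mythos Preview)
Your proposal is correct and follows essentially the same approach as the paper: both establish the correspondence $X_k(t)=x(t-(k-1)\lambda)$ between solutions of $\Sigma(A,B,\Lambda)$ and of the augmented single-delay system, then invoke Theorem~\ref{TheoControlT1}. The paper compresses all of this into ``It is immediate to verify\ldots'', while you spell out the bijection $\Phi$ on initial data and the block-by-block check; one small wording issue is that ``induction on $k$'' is not quite the right framing---it is cleaner to define $Y_k(t)=x(t-(k-1)\lambda)$, verify directly that $Y$ satisfies \eqref{SystAugm} with initial condition $\Phi(x_0)$, and then apply uniqueness to all of $X$, not just $X_1$.
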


\begin{proof}
It is immediate to verify that $x: [-\Lambda_{\max}, T] \to \mathbb C^d$ is the solution of $\Sigma(A, B, \Lambda)$ with initial condition $x_0: [-\Lambda_{\max}, 0) \to \mathbb C^d$ and control $u: [0, T] \to \mathbb C^m$ if and only if the function $X: [-\lambda, T] \to \mathbb C^{Kd}$ defined by
\[
X(t) = 
\begin{pmatrix}
x(t) \\
x(t - \lambda) \\
x(t - 2\lambda) \\
\vdots \\
x(t - (K-1)\lambda) \\
\end{pmatrix}
\]
is the solution of \eqref{SystAugm} with control $u$ and with initial condition $X_0: [\lambda, 0) \to \mathbb C^{K d}$ given by
\begin{equation*}
X_0(t) = \begin{pmatrix}
x_0(t) \\
x_0(t - \lambda) \\
x_0(t - 2\lambda) \\
\vdots \\
x_0(t - (K-1)\lambda) \\
\end{pmatrix}.
\end{equation*}
Since $\widehat C X(t) = x(t)$ for every $t \in [-\lambda, T]$, the statement of the lemma follows immediately from Theorem \ref{TheoControlT1}.
\end{proof}

Since \eqref{SystAugm} is a controlled difference equation with a single delay, we use Lemma \ref{LemmAugm} to characterize the relative controllability of $\Sigma(A, B, \Lambda)$ in terms of a Kalman rank condition.

\begin{corollary}
\label{CoroKalmanAugm}
Let $A = (A_1, \dotsc, A_N) \in \mathcal M_d(\mathbb C)^N$, $B \in \mathcal M_{d, m}(\mathbb C)$, $\Lambda = (\Lambda_1, \dotsc, \Lambda_N) \in (0, +\infty)^N$, and $T > 0$. Assume that $\Lambda$ has commensurable components. Then $\Sigma(A, B, \Lambda)$ is relatively controllable in time $T$ if and only if
\begin{equation}
\label{KalmanAugm}
\rank \begin{pmatrix}\widehat C \widehat B & \widehat C \widehat A \widehat B & \widehat C \widehat A^2 \widehat B & \cdots & \widehat C \widehat A^{\floor{T/\lambda}} \widehat B\end{pmatrix} = d,
\end{equation}
where $\widehat A$, $\widehat B$, $\widehat C$, and $\lambda$ are as in the statement of Lemma \ref{LemmAugm}.
\end{corollary}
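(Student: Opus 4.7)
The plan is to combine Lemma \ref{LemmAugm} with the explicit representation formula for single-delay systems given by Proposition \ref{PropExplicit}, and then run the same surjectivity argument used in the proof of Theorem \ref{TheoControlT1}.

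First, by Lemma \ref{LemmAugm}, relative controllability of $\Sigma(A, B, \Lambda)$ in time $T$ is equivalent to the following output controllability property: for every $X_0 : [-\lambda, 0) \to \mathbb C^{Kd}$ and every $x_1 \in \mathbb C^d$, there exists $u : [0, T] \to \mathbb C^m$ such that the unique solution $X$ of \eqref{SystAugm} satisfies $\widehat C X(T) = x_1$. Since \eqref{SystAugm} is a single-delay controlled difference equation, Definition \ref{DefiXi} gives $\Xi_n = \widehat A^n$ for every $n \in \mathbb N$, and Proposition \ref{PropExplicit} (applied with $N = 1$, $A_1 = \widehat A$, $B = \widehat B$) yields, for $t \in [0, T]$,
\[
X(t) = \sum_{\substack{n \in \mathbb N \\ -\lambda \leq t - n\lambda < 0}} \widehat A^{n-1} \widehat A \, X_0(t - n \lambda) + \sum_{\substack{n \in \mathbb N \\ n\lambda \leq t}} \widehat A^n \widehat B \, u(t - n \lambda),
\]
so that
\[
\widehat C X(T) = \widehat C X_{\mathrm{free}}(T) + \sum_{n=0}^{\floor{T/\lambda}} \widehat C \widehat A^n \widehat B \, u(T - n \lambda),
\]
where $\widehat C X_{\mathrm{free}}(T)$ depends only on $X_0$.

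Next, proceeding exactly as in the implication \ref{CtlT0} $\Rightarrow$ \ref{CtlTKalman1} of Theorem \ref{TheoControlT1}, the output controllability condition is equivalent to the surjectivity of the linear map
\[
\mathbb C^{m(\floor{T/\lambda}+1)} \ni (w_0, \dotsc, w_{\floor{T/\lambda}}) \longmapsto \sum_{n=0}^{\floor{T/\lambda}} \widehat C \widehat A^n \widehat B \, w_n \in \mathbb C^d.
\]
Indeed, choosing $X_0 \equiv 0$ and letting $x_1$ vary in $\mathbb C^d$ shows the ``only if'' direction by taking as control $u$ a function that attains the prescribed values $w_n$ at the times $T - n\lambda$; conversely, if the map is surjective it admits a right inverse, and the piecewise-constructed control from the proof of Theorem \ref{TheoControlT1} (with $\varepsilon = 0$ in the sense that we only prescribe the endpoint value) solves the output controllability problem for any $X_0$.

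Finally, surjectivity of the above linear map is exactly the rank condition
\[
\rank \begin{pmatrix}\widehat C \widehat B & \widehat C \widehat A \widehat B & \widehat C \widehat A^2 \widehat B & \cdots & \widehat C \widehat A^{\floor{T/\lambda}} \widehat B\end{pmatrix} = d,
\]
which is precisely \eqref{KalmanAugm}. No real obstacle is expected: the only mild subtlety is that the rank condition involves the output matrix $\widehat C$, so one must be careful not to invoke a full-state Kalman-type criterion for \eqref{SystAugm}, but rather the output controllability version, which follows transparently from the explicit representation formula above.
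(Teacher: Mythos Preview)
Your proposal is correct and follows essentially the same route as the paper's proof: both apply Proposition \ref{PropExplicit} to the single-delay augmented system \eqref{SystAugm} to obtain $\widehat C X(T)$ as a free-evolution term plus $\sum_{n=0}^{\floor{T/\lambda}} \widehat C \widehat A^n \widehat B\, u(T - n\lambda)$, then argue that output controllability is equivalent to surjectivity of the associated block matrix, using a right inverse for the ``if'' direction and the zero initial condition for the ``only if'' direction. The only cosmetic difference is that the paper writes out the free term explicitly as $\widehat C \widehat A^{1+\floor{T/\lambda}} X_0\bigl(T - (1+\floor{T/\lambda})\lambda\bigr)$ and constructs the control directly rather than pointing back to Theorem \ref{TheoControlT1}.
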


\begin{proof}
Notice that, by Proposition \ref{PropExplicit}, the solution $X: [-\lambda, T] \to \mathbb C^{Kd}$ of \eqref{SystAugm} with initial condition $X_0: [-\lambda, 0) \to \mathbb C^{Kd}$ and control $u: [0, T] \to \mathbb C^m$ is given by
\begin{equation*}
X(t) = \widehat A^{1 + \floor{t/\lambda}} X_0\left(t - \left(1 + \floor{\frac{t}{\lambda}}\right) \lambda\right) + \sum_{n=0}^{\floor{t/\lambda}} \widehat A^n \widehat B u(t - n\lambda).
\end{equation*}
Hence
\begin{equation}
\label{ExplicitSolAugmCU}
\widehat C X(T) = \widehat C \widehat A^{1 + \floor{T/\lambda}} X_0\left(T - \left(1 + \floor{\frac{T}{\lambda}}\right) \lambda\right) + \sum_{n=0}^{\floor{T/\lambda}} \widehat C \widehat A^n \widehat B u(T - n\lambda).
\end{equation}

If $\Sigma(A, B, \Lambda)$ is relatively controllable in time $T$, then, by Lemma \ref{LemmAugm}, taking $X_0 = 0$, one obtains that, for every $x_1 \in \mathbb C^d$, there exists $u: [0, T] \to \mathbb C^m$ such that $\sum_{n=0}^{\floor{T/\lambda}} \widehat C \widehat A^n \widehat B u(T - n\lambda) = x_1$, which shows that \eqref{KalmanAugm} holds. Conversely, if \eqref{KalmanAugm} holds, it follows that the matrix $\begin{pmatrix}\widehat C \widehat B & \widehat C \widehat A \widehat B & \cdots & \widehat C \widehat A^{\floor{T/\lambda}} \widehat B\end{pmatrix}$ admits a right inverse $M \in \mathcal M_{(\floor{T/\lambda} + 1)m, d}(\mathbb C)$. For $X_0: [-\lambda, 0) \to \mathbb C^{Kd}$ and $x_1 \in \mathbb C^d$, let $U = \left(U_j\right)_{j=0}^{\floor{T/\lambda}} \allowbreak \in \mathbb C^{(\floor{T/\lambda} + 1)m}$ be given by
\[U = \begin{pmatrix}U_0 \\ \vdots \\ U_{\floor{T/\lambda}}\end{pmatrix} = M \left[x_1 - \widehat C \widehat A^{1 + \floor{T/\lambda}} X_0\left(T - \left(1 + \floor{\frac{T}{\lambda}}\right) \lambda\right)\right]\]
and take $u: [0, T] \to \mathbb C^m$ satisfying $u(T - n \lambda) = U_n$ for every $n \in \llbracket 0, \floor{T/\lambda}\rrbracket$. It follows immediately from \eqref{ExplicitSolAugmCU} that the solution of \eqref{SystAugm} with initial condition $X_0$ and control $u$ satisfies $\widehat C X(T) = x_1$, and hence, by Lemma \ref{LemmAugm}, $\Sigma(A, B, \Lambda)$ is relatively controllable in time $T$.
\end{proof}

Thanks to Cayley--Hamiltion Theorem, Corollary \ref{CoroKalmanAugm} allows one to obtain an upper bound on the minimal controllability time for $\Sigma(A, B, \Lambda)$ with commensurable delays.

\begin{lemma}
\label{LemmUnifBound}
Let $A = (A_1, \dotsc, A_N) \in \mathcal M_d(\mathbb C)^N$, $B \in \mathcal M_{d, m}(\mathbb C)$, and $\Lambda = (\Lambda_1, \dotsc, \Lambda_N) \in (0, +\infty)^N$. Assume that $\Lambda$ has commensurable components. If there exists $T > 0$ such that $\Sigma(A, B, \Lambda)$ is relatively controllable in time $T$, then its minimal controllability time $T_{\min}$ satisfies $T_{\min} \leq (d - 1) \Lambda_{\max}$.
\end{lemma}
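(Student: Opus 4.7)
The plan is to reduce to the single-delay augmented system from Lemma \ref{LemmAugm} and then to exploit finely the shift-within-blocks structure of $\widehat A$. Write $\Lambda = \lambda(k_1, \dots, k_N)$ with $K = \max_j k_j$, so that $\Lambda_{\max} = K\lambda$, and let $\widehat A, \widehat B, \widehat C$ be as in \eqref{AugmMat}. By Corollary \ref{CoroKalmanAugm}, relative controllability in time $T$ amounts to $\rank \begin{pmatrix}\widehat C \widehat B & \widehat C \widehat A \widehat B & \cdots & \widehat C \widehat A^{\floor{T/\lambda}} \widehat B\end{pmatrix} = d$. Introduce the augmented controllability subspace $W_n = \Span\{\widehat A^k \widehat B w : 0 \leq k \leq n,\; w \in \mathbb C^m\} \subset \mathbb C^{Kd}$ and its output projection $V_n = \widehat C W_n \subset \mathbb C^d$; the goal is to show that, if $V_n = \mathbb C^d$ for some $n$, then already $V_{K(d-1)} = \mathbb C^d$.

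The first ingredient is the block-projection identity $W_n^{(j)} = V_{n+1-j}$ for $j \in \llbracket 1, K\rrbracket$ (with the convention $V_i = 0$ for $i < 0$), where $W_n^{(j)}$ denotes the projection of $W_n$ onto the $j$-th block of $\mathbb C^{Kd}$. This follows by a direct induction on $k$ from the fact that the shift rows of $\widehat A$ send the $(j-1)$-th block of a vector to its $j$-th block, so that the $j$-th block of $\widehat A^k \widehat B w$ equals $\widehat C \widehat A^{k-j+1} \widehat B w$ when $k \geq j-1$ and vanishes otherwise. Second, by the standard Cayley--Hamilton / Kalman argument for the single-delay system, the increasing sequence $W_n$ stabilizes at some step $n_W$, meaning that $W_n = W_{n_W}$ and $\widehat A W_{n_W} \subset W_{n_W}$ for every $n \geq n_W$, and one has $n_W \leq Kd - 1$ (assuming $B \neq 0$, which may be done since the statement is otherwise vacuous).

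The key step is then to exploit the $\widehat A$-invariance of $W_{n_W}$ to force $V_n$ to stabilize $K - 1$ steps before $W_n$ does. For every $j \in \llbracket 2, K\rrbracket$, the $j$-th block of $\widehat A z$ with $z \in W_{n_W}$ is simply the $(j-1)$-th block of $z$, hence the projection of $\widehat A W_{n_W}$ onto the $j$-th block equals $W_{n_W}^{(j-1)} = V_{n_W - j + 2}$. The inclusion $\widehat A W_{n_W} \subset W_{n_W}$ then forces $V_{n_W - j + 2} \subset V_{n_W - j + 1}$, and combined with the monotonicity $V_{n-1} \subset V_n$ this yields the chain of equalities $V_{n_W - K + 1} = V_{n_W - K + 2} = \cdots = V_{n_W}$. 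Together with $V_n = \widehat C W_n = V_{n_W}$ for $n \geq n_W$, one obtains $V_n = V_{n_W}$ for every $n \geq n_W - K + 1 \leq K(d - 1)$. Under the hypothesis of relative controllability at some $T > 0$, this common value is $\mathbb C^d$, so $V_{K(d-1)} = \mathbb C^d$, and relative controllability holds at $T = K(d-1)\lambda = (d-1)\Lambda_{\max}$, yielding the announced bound on $T_{\min}$.

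The main obstacle is to resist a direct application of Cayley--Hamilton to $\widehat A \in \mathcal M_{Kd}(\mathbb C)$, which would yield only $T_{\min} \leq (Kd - 1)\lambda$, a bound strictly weaker than $(d-1)K\lambda$ whenever $K \geq 2$. The gain of $K - 1$ steps really must come from the block-projection identity and the observation that the output $\widehat C \widehat A^n \widehat B$ only ``sees'' the first block of $\widehat A^n \widehat B$, a block which is tied through the shift structure of $\widehat A$ to the $K - 1$ preceding subspaces $V_{n-1}, \ldots, V_{n - K + 1}$.
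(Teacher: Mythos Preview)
Your proof is correct and rests on the same key observation as the paper's: the block-shift structure of $\widehat A$ lets one gain exactly $K-1$ steps beyond the naive Cayley--Hamilton bound in dimension $Kd$. The paper extracts this gain via a direct matrix identity: introducing $\widehat C_K = \begin{pmatrix}0 & \cdots & 0 & \id_d\end{pmatrix}$, it shows $\widehat C = \widehat C_K \widehat A^{K-1}$ and $\widehat C_K \widehat A^j \widehat B = 0$ for $j \leq K-2$, which yields the rank equality
\[
\rank\begin{pmatrix}\widehat C\widehat B & \cdots & \widehat C\widehat A^{k}\widehat B\end{pmatrix}
=
\rank\begin{pmatrix}\widehat C_K\widehat B & \cdots & \widehat C_K\widehat A^{K+k-1}\widehat B\end{pmatrix},
\]
and then applies Cayley--Hamilton to the right-hand side. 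Your argument reaches the same conclusion by a subspace-invariance route: the $\widehat A$-invariance of the stabilized Krylov space $W_{n_W}$, combined with your block-projection identity $W_n^{(j)} = V_{n+1-j}$, forces $V_{n_W - K + 1} = \cdots = V_{n_W}$, so the output spaces $V_n$ have already stabilized $K-1$ steps before $W_n$ does. The two are equivalent reformulations of the same mechanism; the paper's is a bit shorter and more computational, while yours makes the role of the shift structure arguably more transparent.
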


\begin{proof}
For $j \in \llbracket 1, K\rrbracket$, set
\[
\widehat C_j = \begin{pmatrix}0_{d, (j-1)d} & \id_d & 0_{d, (K-j)d}\end{pmatrix} \in \mathcal M_{d, K d}(\mathbb C).
\]
In particular, $\widehat C_1 = \widehat C$. For every $j \in \llbracket 2, K\rrbracket$, one has $\widehat C_j \widehat A = \widehat C_{j-1}$, and thus $\widehat C = \widehat C_K \widehat A^{K-1}$. Hence, for every $k \in \mathbb N$, one has
\[
\begin{pmatrix}\widehat C\widehat B & \widehat C \widehat A \widehat B & \widehat C \widehat A^2 \widehat B & \cdots & \widehat C \widehat A^k \widehat B\end{pmatrix} = \begin{pmatrix}\widehat C_K \widehat A^{K-1} \widehat B & \widehat C_K \widehat A^{K} \widehat B & \widehat C_K \widehat A^{K+1} \widehat B & \cdots & \widehat C_K \widehat A^{K+k-1} \widehat B\end{pmatrix}.
\]
Moreover, since $\widehat C_K \widehat A^j = \widehat C_{K - j}$ for every $j \in \llbracket 0, K-1\rrbracket$, one computes, for $j \in \llbracket 0, K-2\rrbracket$, $\widehat C_K \widehat A^j \widehat B = \widehat C_{K-j} \widehat B = 0$, which shows that
\begin{equation}
\label{RankCK}
\rank \begin{pmatrix}\widehat C\widehat B & \widehat C \widehat A \widehat B & \widehat C \widehat A^2 \widehat B & \cdots & \widehat C \widehat A^k \widehat B\end{pmatrix} = \rank \begin{pmatrix}\widehat C_K \widehat B & \widehat C_K \widehat A \widehat B & \widehat C_K \widehat A^2 \widehat B & \cdots & \widehat C_K \widehat A^{K+k-1} \widehat B\end{pmatrix}.
\end{equation}

Let $T > 0$ be such that $\Sigma(A, B, \Lambda)$ is relatively controllable in time $T$. If $T \leq (d-1) \Lambda_{\max}$, one has immediately that $T_{\min} \leq (d-1) \Lambda_{\max}$. If $T > (d-1) \Lambda_{\max}$, one has, by Corollary \ref{CoroKalmanAugm} and \eqref{RankCK}, that
\[
\rank \begin{pmatrix}\widehat C_K \widehat B & \widehat C_K \widehat A \widehat B & \widehat C_K \widehat A^2 \widehat B & \cdots & \widehat C_K \widehat A^{K+\floor{T/\lambda}-1} \widehat B\end{pmatrix} = d.
\]
By Cayley--Hamilton Theorem, since $\widehat A \in \mathcal M_{Kd}(\mathbb C)$, this implies that
\begin{align*}
d & = \rank \begin{pmatrix}\widehat C_K \widehat B & \widehat C_K \widehat A \widehat B & \widehat C_K \widehat A^2 \widehat B & \cdots & \widehat C_K \widehat A^{K+\floor{T/\lambda}-1} \widehat B\end{pmatrix} \\
 & = \rank \begin{pmatrix}\widehat C_K \widehat B & \widehat C_K \widehat A \widehat B & \widehat C_K \widehat A^2 \widehat B & \cdots & \widehat C_K \widehat A^{Kd-1} \widehat B\end{pmatrix}
\end{align*}
since $K + \floor{T/\lambda} - 1 \geq K d - 1$. Hence, by Corollary \ref{CoroKalmanAugm} and \eqref{RankCK}, it follows that $\Sigma(A, B, \Lambda)$ is relatively controllable in time $T = K(d - 1)\lambda = (d-1) \Lambda_{\max}$, which yields $T_{\min} \leq (d-1) \Lambda_{\max}$.
\end{proof}

Now that Lemma \ref{LemmUnifBound} has established a uniform upper bound on the minimal controllability time for $\Sigma(A, B, \Lambda)$ with commensurate delays, one can use Theorems \ref{TheoCompRat} and \ref{TheoRatPerturb} in order to deduce a uniform upper bound for all delay vectors $\Lambda \in (0, +\infty)^N$.

\begin{theorem}
\label{TheoMinBound}
Let $A = (A_1, \dotsc, A_N) \in \mathcal M_d(\mathbb C)^N$, $B \in \mathcal M_{d, m}(\mathbb C)$, and $\Lambda = (\Lambda_1, \dotsc, \Lambda_N) \in (0, +\infty)^N$. If there exists $T > 0$ such that $\Sigma(A, B, \Lambda)$ is relatively controllable in time $T$, then its minimal controllability time $T_{\min}$ satisfies $T_{\min} \leq (d-1) \Lambda_{\max}$.
\end{theorem}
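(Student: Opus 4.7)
The plan is to combine the commensurable case already handled by Lemma \ref{LemmUnifBound} with the two comparison results of Section \ref{SecCompRat}: Theorem \ref{TheoRatPerturb} produces an arbitrarily close commensurable perturbation $L$ of $\Lambda$ preserving controllability, and Theorem \ref{TheoCompRat} allows one to transfer controllability back from $L$ to $\Lambda$ with a controlled time-dilation factor. Since Theorem \ref{TheoRatPerturb} gives perturbations as close to $\Lambda$ as we want, the resulting dilation can be made as close to $1$ as desired, which will pin $T_{\min}$ down to $(d-1)\Lambda_{\max}$ in the limit.

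Concretely, I would fix an arbitrary $\varepsilon > 0$ and proceed as follows. First, apply Theorem \ref{TheoRatPerturb} to produce $L = (L_1,\dotsc,L_N) \in (0,+\infty)^N$ with commensurable components, $L \succcurlyeq \Lambda$, and $1 \leq \Lambda_j/L_j < 1 + \varepsilon$ for every $j \in \llbracket 1,N\rrbracket$, such that $\Sigma(A,B,L)$ is relatively controllable in time $T$. Next, apply Lemma \ref{LemmUnifBound} to $\Sigma(A,B,L)$: since $L$ has commensurable components and the system is controllable in some positive time, it must be controllable in time $(d-1)L_{\max}$. Then apply Theorem \ref{TheoCompRat} with the roles of $\Lambda$ and $L$ as in its statement (we do have $\Lambda \preccurlyeq L$): with $\kappa = \max_{j \in \llbracket 1,N\rrbracket} \Lambda_j/L_j$, the system $\Sigma(A,B,\Lambda)$ is relatively controllable in time $\kappa (d-1) L_{\max}$. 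Finally, the inequalities $\kappa < 1+\varepsilon$ and $L_j \leq \Lambda_j$ for every $j$ give $L_{\max} \leq \Lambda_{\max}$ and hence $\kappa (d-1) L_{\max} < (1+\varepsilon)(d-1)\Lambda_{\max}$, so $T_{\min} \leq (1+\varepsilon)(d-1)\Lambda_{\max}$. Letting $\varepsilon \to 0^+$ yields $T_{\min} \leq (d-1)\Lambda_{\max}$.

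There is no genuine obstacle left, since the three ingredients plug into each other directly; the only sanity checks are (i) that the set of controllable times is upward closed, which is immediate from the span condition in Definition \ref{DefiRelativeControl}, so that controllability in time $T$ from Theorem \ref{TheoRatPerturb} is compatible with invoking Lemma \ref{LemmUnifBound} and then producing any larger bound, and (ii) that the direction of the preorder $\Lambda \preccurlyeq L$ given by Theorem \ref{TheoRatPerturb} matches the hypothesis of Theorem \ref{TheoCompRat}, which it does. The main conceptual step is really the one already performed in Section \ref{SecCompRat}: the bound $(d-1)\Lambda_{\max}$ for commensurable delays is essentially the Kalman–Cayley–Hamilton bound for the augmented system of Lemma \ref{LemmAugm}, and it is robust enough under the approximation of Theorem \ref{TheoRatPerturb} that no quantitative loss survives in the limit.
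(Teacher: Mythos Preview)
Your proof is correct and follows essentially the same route as the paper's: fix $\varepsilon>0$, invoke Theorem~\ref{TheoRatPerturb} to pass to a nearby commensurable $L\succcurlyeq\Lambda$, apply Lemma~\ref{LemmUnifBound} to get controllability of $\Sigma(A,B,L)$ in time $(d-1)L_{\max}$, transfer back via Theorem~\ref{TheoCompRat} with dilation $\kappa<1+\varepsilon$, bound $L_{\max}\leq\Lambda_{\max}$, and let $\varepsilon\to 0$. The only cosmetic difference is that the paper writes the bound directly as $(1+\varepsilon)(d-1)L_{\max}$ rather than carrying the intermediate factor $\kappa$.
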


\begin{proof}
Let $\varepsilon > 0$ and choose $L \in (0, +\infty)^N$ according to Theorem \ref{TheoRatPerturb}. Then $\Sigma(A, B, L)$ is relatively controllable in time $T$. Thanks to Lemma \ref{LemmUnifBound}, the minimal controllability time $T_{\min}^{(L)}$ for $\Sigma(A, B, L)$ satisfies $T_{\min}^{(L)} \leq (d-1) L_{\max}$, and, in particular, $\Sigma(A, B, L)$ is relatively controllable in time $(d-1) L_{\max}$. Hence, by Theorem \ref{TheoCompRat}, $\Sigma(A, B, \Lambda)$ is relatively controllable in time $(1 + \varepsilon) (d-1) L_{\max}$, which proves that the minimal controllability time $T_{\min}$ for $\Sigma(A, B, \Lambda)$ satisfies $T_{\min} \leq (1 + \varepsilon) (d-1) L_{\max} \leq (1 + \varepsilon) (d-1) \Lambda_{\max}$. Since $\varepsilon > 0$ is arbitrary, one concludes that $T_{\min} \leq (d-1) \Lambda_{\max}$.
\end{proof}

\begin{remark}
The statements and proofs of the results from this section and the previous one can be slightly modified to show that, for every $A = (A_1, \dotsc, A_N) \in \mathcal M_d(\mathbb C)^N$, $B \in \mathcal M_{d, m}(\mathbb C)$, $\Lambda = (\Lambda_1, \dotsc, \Lambda_N)\allowbreak \in (0, +\infty)^N$, and $T \geq (d - 1)\Lambda_{\max}$, one has
\begin{align*}
  & \Span \left\{\widehat\Xi^\Lambda_{[\mathbf n]} B w \midsuchthat [\mathbf n] \in \mathcal N_\Lambda,\; \Lambda \cdot \mathbf n \leq T,\; w \in \mathbb C^m\right\} \displaybreak[0] \\
{} = {} & \Span \left\{\widehat\Xi^\Lambda_{[\mathbf n]} B w \midsuchthat [\mathbf n] \in \mathcal N_\Lambda,\; \Lambda \cdot \mathbf n \leq (d-1)\Lambda_{\max},\; w \in \mathbb C^m\right\}.
\end{align*}
The set $\mathsf V = \Span \left\{\widehat\Xi^\Lambda_{[\mathbf n]} B w \midsuchthat [\mathbf n] \in \mathcal N_\Lambda,\; \Lambda \cdot \mathbf n \leq (d-1)\Lambda_{\max},\; w \in \mathbb C^m\right\}$ is the set of all states $x_1 \in \mathbb C^d$ that can be reached by the system $\Sigma(A, B, \Lambda)$ after time $T \geq (d - 1)\Lambda_{\max}$ starting from a zero initial condition.

When $N = 1$ and the controlled difference equation \eqref{MainSyst} becomes $x(t) = A x(t - \Lambda) + B u(t)$ with $A = A_1$ and $\Lambda = \Lambda_1$, Kalman decomposition (see, e.g., \cite[Lemma 3.3.3]{Sontag1998Mathematical}) states that there exists an invertible matrix $P \in \mathcal M_d(\mathbb C)$ such that
\[P A P^{-1} = 
\begin{pmatrix}
A_{11} & A_{12} \\
0 & A_{22} \\
\end{pmatrix}, \qquad P B = 
\begin{pmatrix}
B_1 \\
0 \\
\end{pmatrix}
\]
with $A_{11} \in \mathcal M_r(\mathbb C)$, $A_{22} \in \mathcal M_{d-r}(\mathbb C)$, $B_1 \in \mathcal M_{r, m}(\mathbb C)$, where $r = \dim \mathsf V$, the pair $(A_{11}, B_1)$ is controllable, and $P \mathsf V = \mathbb C^r \times \{0\}^{d - r} = \Span\{e_1, \dotsc, e_r\}$.

Such decomposition does not hold for larger $N$ in general, i.e., one cannot find in general, for $A = (A_1, \dotsc, A_N) \in \mathcal M_d(\mathbb C)^N$, $B \in \mathcal M_{d, m}(\mathbb C)$, and $\Lambda \in (0, +\infty)^N$ for which $\Sigma(A, B, \Lambda)$ is not relatively controllable in any time $T > 0$, a matrix $P \in \mathcal M_d(\mathbb C)$ for which one would have, for every $j \in \llbracket 1, N\rrbracket$,
\begin{equation}
\label{KalmanDecompN}
P A_j P^{-1} = 
\begin{pmatrix}
A^{(j)}_{11} & A^{(j)}_{12} \\
0 & A^{(j)}_{22} \\
\end{pmatrix}, \qquad P B = 
\begin{pmatrix}
B_1 \\
0 \\
\end{pmatrix}
\end{equation}
with $A^{(j)}_{11} \in \mathcal M_r(\mathbb C)$, $A^{(j)}_{22} \in \mathcal M_{d-r}(\mathbb C)$, $B_1 \in \mathcal M_{r, m}(\mathbb C)$, with $r \in \llbracket 0, d-1\rrbracket$ and such that $\Sigma(A^{(1)}_{11}, \allowbreak \dotsc, A^{(N)}_{11}, \allowbreak B_1, \Lambda)$ is relatively controllable in time $T \geq (r-1)\Lambda_{\max}$. Indeed, consider the case $N = 2$, $d = 4$, $m = 1$, $\Lambda = (1, \ell)$ for some $\ell \in \left(\frac{3}{4}, 1\right)$, and
\begin{align*}
A_1 & {} = \begin{pmatrix}
  0 &         1 & 0 & 0 \\
  2 &         0 & 0 & 0 \\
  0 &         0 & 0 & 1 \\
 -3 &  \sqrt{2} & 0 & 0 \\
\end{pmatrix}, & A_2 & {} = \begin{pmatrix}
\frac{1}{2} &     0 & -1 & 0 \\
          0 &     1 &  0 & 1 \\
          0 &     0 &  1 & 0 \\
   \sqrt{3} &     0 &  0 & 2 \\
\end{pmatrix}, & B & {} = \begin{pmatrix}
0 \\
0 \\
0 \\
1 \\
\end{pmatrix}.
\end{align*}
Notice that
\begin{align*}
  & \Span\{\Xi_{\mathbf n} B \suchthat \mathbf n = (n_1, n_2) \in \mathbb N^2,\; n_1 + \ell n_2 \leq 3\} \displaybreak[0] \\
{} = {} & \Span\{\Xi_{(0, 0)} B, \Xi_{(0, 1)} B, \Xi_{(0, 2)} B, \Xi_{(0, 3)} B, \Xi_{(1, 0)} B, \Xi_{(1, 1)} B, \Xi_{(1, 2)} B, \Xi_{(2, 0)} B, \Xi_{(2, 1)} B, \Xi_{(3, 0)} B\} \displaybreak[0] \\
{} = {} & \Span\left\{
\begin{pmatrix}
0 \\
0 \\
0 \\
1 \\
\end{pmatrix},
\begin{pmatrix}
0 \\
1 \\
0 \\
2 \\
\end{pmatrix},
\begin{pmatrix}
0 \\
3 \\
0 \\
4 \\
\end{pmatrix},
\begin{pmatrix}
0 \\
7 \\
0 \\
8 \\
\end{pmatrix},
\begin{pmatrix}
0 \\
0 \\
1 \\
0 \\
\end{pmatrix},
\begin{pmatrix}
0 \\
0 \\
3 \\
\sqrt{2} \\
\end{pmatrix},
\begin{pmatrix}
0 \\
\sqrt{2} \\
7 \\
5\sqrt{2} \\
\end{pmatrix},
\begin{pmatrix}
0 \\
0 \\
\sqrt{2} \\
0 \\
\end{pmatrix}
\right\} \displaybreak[0] \\
{} = {} & \{0\} \times \mathbb C^3,
\end{align*}
and thus, by the definition of relative controllability and Theorem \ref{TheoCompRat}, one obtains that $\Sigma(A, B, \Lambda)$ is not relatively controllable in any time $T > 0$. We claim that this system cannot be decomposed under the form \eqref{KalmanDecompN}. If it were the case, one immediately verifies from \eqref{KalmanDecompN} that the vector space $\mathsf V = P^{-1} (\mathbb C^r \times \{0\}^{4 - r})$ would contain $B$ and be invariant under left multiplication by $A_1$ and $A_2$. Such invariance implies in particular that $\Xi_{\mathbf n} B \in \mathsf V$ for every $\mathbf n \in \mathbb N^2$, and thus $\{0\} \times \mathbb C^3 \subset \mathsf V$. Such invariance then also implies that
\[
\mathsf V \ni A_1 \begin{pmatrix}0 \\ 1 \\ 0 \\ 0 \\\end{pmatrix} = \begin{pmatrix}1 \\ 0 \\ 0 \\ \sqrt{2} \\\end{pmatrix},
\]
which shows that $\mathsf V = \mathbb C^4$, contradicting the fact that $\mathsf V = P^{-1} (\mathbb C^r \times \{0\}^{4 - r})$ for an invertible $P \in \mathcal M_4(\mathbb C)$ and $r \in \llbracket 0, 3\rrbracket$. Hence $\Sigma(A, B, \Lambda)$ cannot be put under the form \eqref{KalmanDecompN}.
\end{remark}

\begin{example}
\label{ExplCompareDiblik}
Let $A \in \mathcal M_d(\mathbb C)$, $B \in \mathcal M_{d, m}(\mathbb C)$, $k \in \mathbb N^\ast$ and consider the difference equation
\begin{equation}
\label{SystCompareDiblik}
x(t) = x(t - 1) + A x(t - k) + B u(t),
\end{equation}
which we write under the form \eqref{MainSyst} by setting $A_1 = \id_d$, $A_2 = A$, $\Lambda_1 = 1$, and $\Lambda_2 = k$. Notice that, by taking only integer times, \eqref{SystCompareDiblik} can be seen as an implicit Euler discretization of the continuous-time delayed control system $\dot x(t) = A_0 x(t - \tau) + B_0 u(t)$ with time step $h = \frac{\tau}{k}$ and $A = h A_0$, $B = h B_0$, and is similar to the system \eqref{SystDiblik} obtained by an explicit Euler discretization.

One easily verifies using \eqref{EqDefiXi} that the matrix coefficients $\Xi_{\mathbf n}$ associated with \eqref{SystCompareDiblik} are given for $\mathbf n = (n_1, n_2) \in \mathbb N^2$ by
\[
\Xi_{\mathbf n} = \binom{n_1 + n_2}{n_1} A^{n_2},
\]
and one then obtains from Definition \ref{DefiXiHat} that
\begin{equation}
\label{XiHatDiblik}
\widehat\Xi_{[\mathbf n]}^{\Lambda} = \sum_{j = 0}^{\floor{\frac{n_1}{k} + n_2}} \binom{n_1 + k n_2 - j(k - 1)}{j} A^j.
\end{equation}
Hence $\widehat\Xi_{[\mathbf n]}^\Lambda$ coincides with the discrete delayed matrix exponential $e^{A(n_1 + 1 + k(n_2 - 1))}_{k - 1}$ introduced in \cite{Diblik2006Representation}. It follows from \eqref{XiHatDiblik} that
\[
\Span \left\{\widehat\Xi^\Lambda_{[\mathbf n]} B w \midsuchthat [\mathbf n] \in \mathcal N_\Lambda,\; \Lambda \cdot \mathbf n \leq T,\; w \in \mathbb C^m\right\} = \range \begin{pmatrix}B & A B & A^2 B & \cdots & A^{\floor{{T}/{k}}} B\end{pmatrix},
\]
and thus, by Theorem \ref{TheoControlT1}, \eqref{SystCompareDiblik} is relatively controllable in time $T$ if and only if
\begin{equation}
\label{CriterionDiblik}
\rank \begin{pmatrix}B & A B & A^2 B & \cdots & A^{\floor{{T}/{k}}} B\end{pmatrix} = d,
\end{equation}
its minimal controllability time $T_{\min}$ satisfying $T_{\min} \leq k(d-1)$ thanks to Theorem \ref{TheoMinBound} (this is also an immediate consequence of \eqref{CriterionDiblik} and Cayley--Hamilton theorem in this case). In particular, in the single-input case $m = 1$, the minimal controllability time is $T_{\min} = k (d - 1)$, since the rank of the matrix in \eqref{CriterionDiblik} is upper bounded by $d - 1$ when $T < k (d - 1)$. Notice that this is very similar to the relative controllability criterion for \eqref{SystDiblik} proved in \cite[Theorem 3.1]{Diblik2008Controllability}.
\end{example}

Theorem \ref{TheoMinBound} shows that, given $A = (A_1, \dotsc, A_N) \in \mathcal M_d(\mathbb C)^N$, $B \in \mathcal M_{d, m}(\mathbb C)$, and $\Lambda \in (0, +\infty)^N$, if one wants to check whether $\Sigma(A, B, \Lambda)$ is relatively controllable in some time $T > 0$, it suffices to verify whether it is relatively controllable in time $(d - 1)\Lambda_{\max}$, i.e., if
\[\Span \left\{\widehat\Xi^\Lambda_{[\mathbf n]} B w \midsuchthat [\mathbf n] \in \mathcal N_\Lambda,\; \Lambda \cdot \mathbf n \leq (d - 1)\Lambda_{\max},\; w \in \mathbb C^m\right\} = \mathbb C^d\]
or, equivalently, if 
\begin{equation}
\label{CondContrLambda}
\Span \left\{\widehat\Xi^\Lambda_{[\mathbf n]} B e_j \midsuchthat [\mathbf n] \in \mathcal N_\Lambda,\; \Lambda \cdot \mathbf n \leq (d - 1)\Lambda_{\max},\; j \in \llbracket 1, m\rrbracket\right\} = \mathbb C^d,
\end{equation}
where $e_1, \dotsc, e_m$ is the canonical basis of $\mathbb C^m$. The set whose span is evaluated in the left-hand side of \eqref{CondContrLambda} is finite, its cardinality being upper bounded by $m \#\{\mathbf n \in \mathbb N^N \suchthat \abs{\mathbf n}_1 \leq (d - 1)\Lambda_{\max}/\Lambda_{\min}\}$, which is large when $\Lambda_{\max}/\Lambda_{\min}$ is large. The next results provides a way of improving such upper bound, and hence reducing the number of elements to be evaluated in order to study the relative controllability of $\Sigma(A, B, \Lambda)$.

\begin{theorem}
\label{TheoLessPoints}
Let $A = (A_1, \dotsc, A_N) \in \mathcal M_d(\mathbb C)^N$, $B \in \mathcal M_{d, m}(\mathbb C)$, and $\Lambda, L \in (0, +\infty)^N$ with $\Lambda \preccurlyeq L$. Then $\Sigma(A, B, \Lambda)$ is relatively controllable in some time $T > 0$ if and only if
\begin{equation}
\label{CondContrL}
\Span \left\{\widehat\Xi^\Lambda_{[\mathbf n]} B e_j \midsuchthat [\mathbf n] \in \mathcal N_\Lambda,\; L \cdot \mathbf n \leq (d - 1)L_{\max},\; j \in \llbracket 1, m\rrbracket\right\} = \mathbb C^d.
\end{equation}
\end{theorem}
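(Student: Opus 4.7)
The claim has two directions. The easy one is $(\impliedby)$: setting $\kappa = \max_{j \in \llbracket 1, N\rrbracket} \Lambda_j/L_j$, any $\mathbf n \in \mathbb N^N$ with $L \cdot \mathbf n \leq (d-1)L_{\max}$ satisfies $\Lambda \cdot \mathbf n = \sum_j (\Lambda_j/L_j) L_j n_j \leq \kappa L \cdot \mathbf n \leq \kappa(d-1)L_{\max}$. So the span in \eqref{CondContrL} is contained in $\Span\{\widehat\Xi^\Lambda_{[\mathbf n]} B w \suchthat [\mathbf n] \in \mathcal N_\Lambda,\; \Lambda \cdot \mathbf n \leq \kappa(d-1)L_{\max},\; w \in \mathbb C^m\}$. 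If the former equals $\mathbb C^d$, so does the latter, and Theorem \ref{TheoControlT1} gives relative controllability of $\Sigma(A, B, \Lambda)$ in time $\kappa(d-1)L_{\max}$.

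For $(\implies)$, suppose $\Sigma(A, B, \Lambda)$ is relatively controllable in some time. Theorem \ref{TheoMinBound} pushes this down to time $(d-1)\Lambda_{\max}$, and the remark closing Section \ref{SecMinBound} identifies the resulting span with the full orbit $\mathsf V := \Span\{\widehat\Xi^\Lambda_{[\mathbf n]} B w \suchthat \mathbf n \in \mathbb N^N,\; w \in \mathbb C^m\}$; hence $\mathsf V = \mathbb C^d$. Let $\mathsf V_L$ denote the span in \eqref{CondContrL}; since $\mathsf V_L \subset \mathsf V$ trivially, the real task is the reverse inclusion. Equivalently, I need the mixed stabilization that the family $\mathsf V_T := \Span\{\widehat\Xi^\Lambda_{[\mathbf n]} B w \suchthat L \cdot \mathbf n \leq T,\; w \in \mathbb C^m\}$ is constant (and equal to $\mathsf V$) once $T \geq (d-1)L_{\max}$.

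My plan for that stabilization follows the template of Lemma \ref{LemmUnifBound} and Theorem \ref{TheoMinBound}. First I handle the commensurable case $L = \nu k$: I form the augmented state $X(t) = \left(x(t)^\transp, x(t-\nu)^\transp, \ldots, x(t-(K-1)\nu)^\transp\right)^\transp \in \mathbb C^{Kd}$ with $K = \max_j k_j$, and apply Cayley--Hamilton to $\widehat A \in \mathcal M_{Kd}(\mathbb C)$ together with the $\widehat C_K$ trick of Lemma \ref{LemmUnifBound} to obtain stabilization at $T = (d-1)L_{\max}$ for the coarse $L$-coefficients $\widehat\Xi^L$. I then upgrade to stabilization for the finer $\widehat\Xi^\Lambda$ by using the decomposition $\widehat\Xi^L_{[\mathbf n]_L} = \sum_{[\mathbf n']_\Lambda \subset [\mathbf n]_L} \widehat\Xi^\Lambda_{[\mathbf n']_\Lambda}$ of Proposition \ref{PropPreorder} (available because $\Lambda \preccurlyeq L$) together with the explicit description of $\Lambda$-subclasses via Proposition \ref{Prop3-9}; within any bounded $L$-time window each $L$-class breaks into finitely many $\Lambda$-subclasses, so the upgrade is finite-dimensional. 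For general $L$, I apply Theorem \ref{TheoRatPerturb} to $L$ itself to find a commensurable $L^{(\varepsilon)} \succcurlyeq L$ with $L_j/L^{(\varepsilon)}_j \in [1, 1+\varepsilon)$ (the chain $\Lambda \preccurlyeq L \preccurlyeq L^{(\varepsilon)}$ lets the commensurable step apply to $L^{(\varepsilon)}$), and use the agreement of $L$- and $L^{(\varepsilon)}$-equivalence classes on bounded time windows, as in Lemma \ref{LemmLApproxLambda}, to pass to the limit $\varepsilon \to 0$.

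The delicate point I expect is the upgrade step in the commensurable case: the augmented system of Lemma \ref{LemmAugm} naturally produces only the $L$-sums $\widehat\Xi^L$, and recovering the individual $\Lambda$-summands with the exact constant $(d-1)L_{\max}$, rather than some larger multiple of $L_{\max}$, will require either a refined augmented state that records each $\Lambda$-subclass within one $L$-step or an algebraic inversion of the summation at the level where Cayley--Hamilton operates.
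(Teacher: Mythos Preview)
Your easy direction is fine and matches the paper.

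For $(\implies)$, however, your plan does not close. The augmented system of Lemma \ref{LemmAugm} built from $L=\nu k$ has matrix $\widehat A$ whose blocks are $\widehat A_r=\sum_{j:k_j=r}A_j$, i.e.\ it encodes only the $L$-structure. Cayley--Hamilton on $\widehat A$ therefore produces linear relations among the \emph{coarse} coefficients $\widehat\Xi^L_{[\mathbf n]_L}$, not among the $\widehat\Xi^\Lambda_{[\mathbf n]_\Lambda}$ you need. Your ``upgrade step'' asks to recover the individual $\Lambda$-summands from their $L$-sums, but the map $\{\widehat\Xi^\Lambda_\tau\}_{\tau\subset[\mathbf n]_L}\mapsto \widehat\Xi^L_{[\mathbf n]_L}$ is many-to-one, and the number of $\Lambda$-subclasses inside $[\mathbf n]_L$ grows with $|\mathbf n|_1$, so no fixed finite-dimensional refinement of the augmented state can track them. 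Your own closing paragraph essentially concedes that this step is open; it is not a technicality but the whole difficulty.

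The paper sidesteps the upgrade entirely with a different perturbation of $L$. Writing $\Lambda=M\ell$ as in Proposition \ref{Prop3-9}, the hypothesis $\Lambda\preccurlyeq L$ gives $L=Mr$ for some $r\in\mathbb R^h$. Perturb $r$ to a rationally independent $r_\varepsilon$ and set $L_\varepsilon=Mr_\varepsilon$. The point is that now $Z(L_\varepsilon)=\ker M^\transp\cap\mathbb Z^N=Z(\Lambda)$, so $L_\varepsilon\approx\Lambda$ and, by Proposition \ref{PropPreorder}, $\widehat\Xi^{L_\varepsilon}_{[\mathbf n]}=\widehat\Xi^{\Lambda}_{[\mathbf n]}$ for every $\mathbf n$: the coefficients are literally the same, so there is nothing to upgrade. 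Relative controllability of $\Sigma(A,B,\Lambda)$ transfers to $\Sigma(A,B,L_\varepsilon)$ by Theorem \ref{TheoCompRat}, Theorem \ref{TheoMinBound} gives the span condition with the constraint $L_\varepsilon\cdot\mathbf n\le(d-1)L_{\varepsilon\,\max}$, and a finiteness argument shows this constraint implies $L\cdot\mathbf n\le(d-1)L_{\max}$ for $\varepsilon$ small. Your perturbation goes the wrong way: you push $L$ to a commensurable $L^{(\varepsilon)}\succcurlyeq L$, which still sits strictly above $\Lambda$ and leaves the upgrade problem intact. The fix is to perturb $L$ \emph{down} to something $\approx\Lambda$.
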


\begin{proof}
If \eqref{CondContrL} is satisfied, then, since $\Lambda \cdot \mathbf n \leq \frac{\Lambda_{\max}}{L_{\min}} L \cdot \mathbf n$ for every $\mathbf n \in \mathbb N^N$, one obtains that
\begin{align*}
\mathbb C^d & {} = \Span \left\{\widehat\Xi^\Lambda_{[\mathbf n]} B e_j \midsuchthat [\mathbf n] \in \mathcal N_\Lambda,\; L \cdot \mathbf n \leq (d - 1)L_{\max},\; j \in \llbracket 1, m\rrbracket\right\} \displaybreak[0] \\
 & {} \subset \Span \left\{\widehat\Xi^\Lambda_{[\mathbf n]} B e_j \midsuchthat [\mathbf n] \in \mathcal N_\Lambda,\; \Lambda \cdot \mathbf n \leq (d - 1)\Lambda_{\max} \frac{L_{\max}}{L_{\min}},\; j \in \llbracket 1, m\rrbracket\right\}
\end{align*}
which proves that $\Sigma(A, B, \Lambda)$ is relatively controllable in time $T = (d - 1)\Lambda_{\max} \frac{L_{\max}}{L_{\min}}$, and thus also in time $T = (d-1)\Lambda_{\max}$ thanks to Theorem \ref{TheoMinBound}.

Let $\varepsilon > 0$. Write $\Lambda = M \ell$, with $M \in \mathcal M_{N, h}(\mathbb N)$ for some $h \in \llbracket 1, N\rrbracket$ and $\ell = (\ell_1, \dotsc,\allowbreak \ell_h) \in (0, +\infty)^h$ with rationally independent components, chosen according to Proposition \ref{Prop3-9}. Since $\Lambda \preccurlyeq L$, it follows from Proposition \ref{Prop3-9} that $L \in \range M$, and thus there exists $r \in \mathbb R^h$ such that $L = M r$. Take $r_\varepsilon \in \mathbb R^h$ with rationally independent components satisfying $\abs{r - r_\varepsilon}_\infty < \varepsilon/\abs{M}_\infty$, and set $L_\varepsilon = M r_\varepsilon$. Then $\abs{L - L_\varepsilon}_\infty < \varepsilon$ and, in particular, $L_\varepsilon \in (0, +\infty)^N$ for $\varepsilon$ small enough. Notice that $L_\varepsilon \approx \Lambda$, since $\Lambda \preccurlyeq L_\varepsilon$ by construction and, if $\mathbf n \in \mathbb N^N$ is such that $L_\varepsilon \cdot \mathbf n = 0$, then $\mathbf n^\transp M r_\varepsilon = 0$, which implies, from the fact that $r_\varepsilon$ has rationally independent components and that $\mathbf n^\transp M$ is a row vector of integers, that $\mathbf n^\transp M = 0$, yielding $\Lambda \cdot \mathbf n = \mathbf n^\transp M \ell = 0$, and thus $L_\varepsilon \preccurlyeq \Lambda$. Since $\Lambda \approx L_\varepsilon$, it follows from Theorem \ref{TheoCompRat} that $\Sigma(A, B, \Lambda)$ is relatively controllable in some time $T > 0$ if and only if $\Sigma(A, B, L_\varepsilon)$ is relatively controllable in some time, i.e.,
\begin{equation*}
\Span \left\{\widehat\Xi^{L_\varepsilon}_{[\mathbf n]} B e_j \midsuchthat [\mathbf n] \in \mathcal N_{L_\varepsilon},\; L_\varepsilon \cdot \mathbf n \leq (d - 1)L_{\varepsilon\,\max},\; j \in \llbracket 1, m\rrbracket\right\} = \mathbb C^d.
\end{equation*}
By Proposition \ref{PropPreorder}, this is equivalent to
\begin{equation}
\label{SpanLEps}
\Span \left\{\widehat\Xi^{\Lambda}_{[\mathbf n]} B e_j \midsuchthat [\mathbf n] \in \mathcal N_{\Lambda},\; L_\varepsilon \cdot \mathbf n \leq (d - 1)L_{\varepsilon\,\max},\; j \in \llbracket 1, m\rrbracket\right\} = \mathbb C^d.
\end{equation}

Notice that, if $\varepsilon$ is small enough, then, for every $\mathbf n \in \mathbb N^N$, $L_\varepsilon \cdot \mathbf n \leq (d - 1)L_{\varepsilon\,\max}$ implies $L \cdot \mathbf n \leq (d-1) L_{\max}$. Indeed, assume that, for every $\varepsilon > 0$, there exists $\mathbf n_\varepsilon \in \mathbb N^N$ such that $L_\varepsilon \cdot \mathbf n_\varepsilon \leq (d - 1)L_{\varepsilon\,\max}$ and $L \cdot \mathbf n_\varepsilon > (d-1) L_{\max}$. Then $(d-1)L_{\max} < L \cdot \mathbf n_\varepsilon \leq (d-1)L_{\varepsilon\,\max} + (L - L_\varepsilon) \cdot \mathbf n_\varepsilon$, which implies that $(d-1)L_{\max} < L \cdot \mathbf n_\varepsilon \leq (d-1)L_{\max} + \varepsilon (d - 1 + \abs{\mathbf n_\varepsilon}_1)$ and so
\begin{equation}
\label{InegLneps}
(d-1)L_{\max} < L \cdot \mathbf n_\varepsilon \leq (d-1)L_{\max} + \varepsilon (d - 1) \left(1 + \frac{L_{\varepsilon\,\max}}{L_{\varepsilon\,\min}}\right)
\end{equation}
Since the set $\{L \cdot \mathbf n \suchthat \mathbf n \in \mathbb N^N\} \cap [0, \tau]$ is finite for every $\tau \geq 0$, one obtains that, for every $K \geq 0$, the set $\{\mathbf n \in \mathbb N^N \suchthat K < L \cdot \mathbf n \leq K + \delta\}$ is empty if $\delta > 0$ is small enough. Hence, since $L_{\varepsilon\,\max}/L_{\varepsilon\,\min} \to L_{\max}/L_{\min}$ as $\varepsilon \to 0$, one obtains that, for $\varepsilon > 0$ small enough, \eqref{InegLneps} cannot be satisfied, which proves that $L_\varepsilon \cdot \mathbf n \leq (d - 1)L_{\varepsilon\,\max}$ implies $L \cdot \mathbf n \leq (d-1) L_{\max}$ for $\varepsilon > 0$ small enough.

If $\Sigma(A, B, \Lambda)$ is relatively controllable in some time, then \eqref{SpanLEps} is satisfied. Hence, for $\varepsilon > 0$ small enough,
\begin{align*}
\mathbb C^d & {} = \Span \left\{\widehat\Xi^{\Lambda}_{[\mathbf n]} B e_j \midsuchthat [\mathbf n] \in \mathcal N_{\Lambda},\; L_\varepsilon \cdot \mathbf n \leq (d - 1)L_{\varepsilon\,\max},\; j \in \llbracket 1, m\rrbracket\right\} \displaybreak[0] \\
& {} \subset \Span \left\{\widehat\Xi^{\Lambda}_{[\mathbf n]} B e_j \midsuchthat [\mathbf n] \in \mathcal N_{\Lambda},\; L \cdot \mathbf n \leq (d - 1)L_{\max},\; j \in \llbracket 1, m\rrbracket\right\},
\end{align*}
which proves \eqref{CondContrL}.
\end{proof}

Notice that the set whose span is evaluated on the left-hand side of \eqref{CondContrL} has at most $m \#\{\mathbf n \in \mathbb N^N \suchthat \abs{\mathbf n}_1 \leq (d-1) L_{\max}/L_{\min}\}$ elements, which is an improvement with respect to the upper bound obtained previously for the set whose span is evaluated on the left-hand side of \eqref{CondContrLambda} as soon as $L_{\max}/L_{\min} < \Lambda_{\max}/\Lambda_{\min}$. Hence Theorem \ref{TheoLessPoints} allows one to algorithmically check whether $\Sigma(A, B, \Lambda)$ is relatively controllable in less steps than by using \eqref{CondContrLambda}. In particular, since we have $\Lambda \preccurlyeq (1, 1, \dotsc, 1)$ for every $\Lambda \in (0, +\infty)^N$ with rationally independent components, one obtains the following improvement of \eqref{CondContrLambda} in this case.

\begin{corollary}
Let $A = (A_1, \dotsc, A_N) \in \mathcal M_d(\mathbb C)^N$, $B \in \mathcal M_{d, m}(\mathbb C)$, and $\Lambda \in (0, +\infty)^N$. Assume that $\Lambda$ has rationally independent components. Then $\Sigma(A, B, \Lambda)$ is relatively controllable in some time $T > 0$ if and only if
\begin{equation*}
\Span \left\{\Xi_{\mathbf n} B e_j \midsuchthat \mathbf n \in \mathbb N^N,\; \abs{\mathbf n}_1 \leq d - 1,\; j \in \llbracket 1, m\rrbracket\right\} = \mathbb C^d.
\end{equation*}
\end{corollary}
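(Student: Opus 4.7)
The plan is to apply Theorem~\ref{TheoLessPoints} with the specific choice $L = (1, 1, \dotsc, 1) \in (0, +\infty)^N$. First I would verify the hypothesis $\Lambda \preccurlyeq L$: since $\Lambda$ has rationally independent components, $Z(\Lambda) = \{0\}$, so trivially $Z(\Lambda) \subset Z(L)$ for any $L \in (0, +\infty)^N$, which is precisely the definition of $\Lambda \preccurlyeq L$ from Definition~\ref{DefiPreccurlyeq}.

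Next I would simplify the three ingredients of \eqref{CondContrL} under these choices. For $L = (1, \dotsc, 1)$, one has $L \cdot \mathbf n = n_1 + \cdots + n_N = \abs{\mathbf n}_1$ and $L_{\max} = 1$, so the bound $L \cdot \mathbf n \leq (d-1) L_{\max}$ becomes exactly $\abs{\mathbf n}_1 \leq d - 1$. Moreover, since $\Lambda$ has rationally independent components, the relation $\Lambda \cdot \mathbf n = \Lambda \cdot \mathbf n^\prime$ for $\mathbf n, \mathbf n^\prime \in \mathbb N^N$ forces $\mathbf n = \mathbf n^\prime$, so every equivalence class $[\mathbf n]_\Lambda$ is a singleton and $\widehat\Xi^\Lambda_{[\mathbf n]} = \Xi_{\mathbf n}$ by \eqref{EqDefiXiHat}.

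Substituting these three simplifications into \eqref{CondContrL} yields exactly the condition in the statement, namely
\[
\Span \left\{\Xi_{\mathbf n} B e_j \midsuchthat \mathbf n \in \mathbb N^N,\; \abs{\mathbf n}_1 \leq d - 1,\; j \in \llbracket 1, m\rrbracket\right\} = \mathbb C^d,
\]
and Theorem~\ref{TheoLessPoints} asserts this equivalence. There is no real obstacle here: the whole work has been done in the proof of Theorem~\ref{TheoLessPoints}, and the corollary is essentially a matter of recording what happens when one specializes to rationally independent $\Lambda$ and the uniform reference delay $L = (1, \dotsc, 1)$. The only thing to be careful about is to observe explicitly that rational independence makes the equivalence classes trivial so that the coefficients $\widehat\Xi^\Lambda_{[\mathbf n]}$ appearing in Theorem~\ref{TheoLessPoints} collapse to the original $\Xi_{\mathbf n}$ from Definition~\ref{DefiXi}.
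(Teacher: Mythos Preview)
Your proposal is correct and matches the paper's approach exactly: the corollary is stated as an immediate specialization of Theorem~\ref{TheoLessPoints} with $L = (1, \dotsc, 1)$, using that rationally independent $\Lambda$ satisfies $\Lambda \preccurlyeq L$ for every $L$ and that the equivalence classes $[\mathbf n]_\Lambda$ are singletons so $\widehat\Xi^\Lambda_{[\mathbf n]} = \Xi_{\mathbf n}$.
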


\bibliographystyle{abbrv}
\bibliography{Bib}

\end{document}